\begin{document}
%Commands Used%

%\newcommand{\ci}[1]{_{ {}_{\scriptstyle #1}}}

\newcommand{\norm}[1]{\ensuremath{\left\|#1\right\|}}
\newcommand{\abs}[1]{\ensuremath{\left\vert#1\right\vert}}
\newcommand{\ip}[2]{\ensuremath{\left\langle#1,#2\right\rangle}}
\newcommand{\p}{\ensuremath{\partial}}
\newcommand{\pr}{\mathcal{P}}

\newcommand{\pbar}{\ensuremath{\bar{\partial}}}
\newcommand{\db}{\overline\partial}
\newcommand{\D}{\mathbb{D}}
\newcommand{\B}{\mathbb{B}}
\newcommand{\Sp}{\mathbb{S}}
\newcommand{\T}{\mathbb{T}}
\newcommand{\R}{\mathbb{R}}
\newcommand{\Z}{\mathbb{Z}}
\newcommand{\C}{\mathbb{C}}
\newcommand{\N}{\mathbb{N}}
\newcommand{\mQ}{\mathcal{Q}}
\newcommand{\mS}{\mathcal{S}}
\newcommand{\scrH}{\mathcal{H}}
\newcommand{\scrL}{\mathcal{L}}
\newcommand{\td}{\widetilde\Delta}
\newcommand{\pw}{\text{PW}}
\newcommand{\esup}{\text{ess.sup}}
\newcommand{\Tn}{\mathcal{T}_n}
\newcommand{\Bn}{\mathbb{B}_n}
\newcommand{\rt}{\mathcal{O}}
\newcommand{\avg}[1]{\langle #1 \rangle}
\newcommand{\one}{\mathbbm{1}}
\newcommand{\eps}{\varepsilon}

\newcommand{\La}{\langle }
\newcommand{\Ra}{\rangle }
\newcommand{\rk}{\operatorname{rk}}
\newcommand{\card}{\operatorname{card}}
\newcommand{\ran}{\operatorname{Ran}}
\newcommand{\osc}{\operatorname{OSC}}
\newcommand{\im}{\operatorname{Im}}
\newcommand{\re}{\operatorname{Re}}
\newcommand{\tr}{\operatorname{tr}}
\newcommand{\vf}{\varphi}
\newcommand{\f}[2]{\ensuremath{\frac{#1}{#2}}}

\newcommand{\kzp}{k_z^{(p,\alpha)}}
\newcommand{\klp}{k_{\lambda_i}^{(p,\alpha)}}
\newcommand{\TTp}{\mathcal{T}_p}
\newcommand{\m}[1]{\mathcal{#1}}
\newcommand{\md}{\mathcal{D}}
\newcommand{\qan}{\abs{Q}^{\alpha/n}}
\newcommand{\sbump}[2]{[[ #1,#2 ]]}
\newcommand{\mbump}[2]{\lceil #1,#2 \rceil}
\newcommand{\cbump}[2]{\lfloor #1,#2 \rfloor}
\newcommand{\unit}{1\!\!1}

%%%%%%%%%%%%%%%%%%%%%%%%%%%%

\newcommand{\entrylabel}[1]{\mbox{#1}\hfill}

\newenvironment{entry}
{\begin{list}{X}%
  {\renewcommand{\makelabel}{\entrylabel}%
      \setlength{\labelwidth}{55pt}%
      \setlength{\leftmargin}{\labelwidth}%\labelsep}%
      \addtolength{\leftmargin}{\labelsep}%
   }%
}%
{\end{list}}

%%%%%%%%%%%%%%%%%%%%%%%%%%%%

\numberwithin{equation}{section}

\newtheorem{thm}{Theorem}[section]
\newtheorem{lm}[thm]{Lemma}
\newtheorem{cor}[thm]{Corollary}
\newtheorem{conj}[thm]{Conjecture}
\newtheorem{prob}[thm]{Problem}
\newtheorem{prop}[thm]{Proposition}
\newtheorem*{prop*}{Proposition}
\newtheorem{claim}[thm]{Claim}

\theoremstyle{remark}
\newtheorem{rem}[thm]{Remark}
\newtheorem*{rem*}{Remark}
\newtheorem{defn}[thm]{Definition}

\hyphenation{geo-me-tric}

\title[Weyl Asymptotics]{Weyl Asymptotics for Perturbations of Morse Potential and 
Connections to the Riemann Zeta Function}

\author[]{Rob Rahm}
\address{Texas A\&M University}
\email{robertrahm@gmail.com}

%\thanks{$\dagger$ Research supported in part by National Science Foundation DMS grant \# 1101251.}

\subjclass[2010]{Primary: 32L40 Secondary: 34B24, 11M26}
\keywords{Morse potential, perturbation, Riemann zeta function, Weyl asymptotics}

\begin{abstract}
Let $N(T;V)$ denote the number of eigenvalues of the Schr\"odinger operator $-y'' + Vy$ with absolute value 
less than $T$. This paper studies the Weyl asymptotics of perturbations of the Schr\"odinger operator 
$-y'' + \frac{1}{4}e^{2t}y$ on $[x_0,\infty)$. In particular, we show that perturbations by functions $\varepsilon(t)$ that 
satisfy $\abs{\varepsilon(t)}\lesssim e^{t}$ do not change the Weyl asymptotics very much. Special emphasis is placed on connections to the asymptotics of the 
zeros of the Riemann zeta function. 
\end{abstract}

\maketitle

\section{Introduction}
It is known that if the Riemann hypothesis is true, then there is a positive semi--definite 
matrix $H(x)=\begin{pmatrix}h_{1}(x)&h_2(x)\\h_2(x)&h_3(x)\end{pmatrix}$ such that the spectrum of:
\begin{align*}
\begin{pmatrix}0&-1\\1&0\end{pmatrix}\begin{pmatrix}y_1'(x)\\y_2'(x)\end{pmatrix}
=zH(x)\begin{pmatrix}y_1(x)\\y_2(x)\end{pmatrix}
\end{align*}
(with self--adjoint boundary conditions on an interval $[a,b]$; here $b\leq\infty$), is the same as the imaginary parts of the 
zeros of the Riemann $\zeta$ function on the line $\frac{1}{2}+it$. If the coefficients are smooth enough, then 
this can be transformed to a Schr\"odinger operator on $[a,b]$ that squares the eigenvalues (see for example 
\cite{Lag2006,Lag2009}). 

The purpose of this paper is to indicate some properties that this potential -- if it exists -- must have. It 
is well--known that if $Z(T)$ represents the number of zeros of magnitude less than $\abs{T}$ of the $\zeta$ function, 
then:
\begin{align*}
Z(T) = \frac{1}{\pi}T\log{T} + \frac{1}{\pi}(-2\log{2\pi} - 1)T + O(\log{T}).
\end{align*}
If a potential, $V$, exists whose eigenvalues are the squares of the imaginary parts of the zeros of $\zeta$, and if 
$N(T,L_V)$ represents the number of eigenvalues less than $T$, then the Weyl--asymptotics would satisfy:
\begin{align*}
Z(T) = \frac{1}{\pi}\sqrt{T}\log{\sqrt{T}} + \frac{1}{\pi}(-2\log{2\pi} - 1)\sqrt{T} + O(\log{\sqrt{T}})
\end{align*}
We use this to give some properties that such a potential must have to match the asymptotics in this way. 

Much of the work in this paper builds off of work by Lagarias in papers such as \cites{Lag2006,Lag2009}. 
In \cite{Lag2009}, it is shown that if $V(t)=\frac{1}{4}e^{2t} + ke^{t}$ and if $L_V:=L_{V,x_0,\alpha}f = -f'' +Vf$ is 
the associated Schr\"odinger operator on the interval $[x_0,\infty)$, and if $N(T;V,x_0,\alpha)$ is the number of 
eigenvalues of $L_V$ less than $\abs{T}$, then
\begin{thm}\label{T:lag}
For $L_{Q_{k},x_0,\alpha}$ the Weyl Asymptotics satisfy:
\begin{align}\label{E:lag_asym}
N(T;V,x_0) = \frac{1}{\pi}\sqrt{T}\log{\sqrt{T}} + \frac{1}{\pi}(2\log2 - 1 - x_0)\sqrt{T} + O(1), 
\end{align}
as $T\to\infty$. The constant in the $O(1)$ depends on $k$ and $x_0$. 
\end{thm}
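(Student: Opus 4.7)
The plan is to combine a semiclassical Bohr--Sommerfeld quantization rule with an explicit evaluation of the classical action integral. Since $V(t)=\tfrac14 e^{2t}+ke^{t}$ is smooth, strictly increasing on $[x_0,\infty)$ and tends to $+\infty$, the operator $L_{V,x_0,\alpha}$ has compact resolvent, and for every $T>V(x_0)$ there is a unique classical turning point $t^{*}(T)$ defined by $V(t^{*})=T$. I would first establish the quantitative quantization rule
\begin{align*}
N(T;V,x_0)=\frac{1}{\pi}\int_{x_0}^{t^{*}(T)}\sqrt{T-V(t)}\,dt+O(1),
\end{align*}
via a WKB analysis with sharp Langer--Olver remainder estimates, the $O(1)$ absorbing the Maslov index at the turning point, the phase shift from the boundary condition encoded by $\alpha$, and the uniform WKB error. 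Alternatively, the substitution $x=e^{t}$ turns the eigenvalue equation into a Whittaker equation with parameters $\mu=-k$ and $\nu=i\sqrt T$, and an asymptotic analysis of the zeros of $M_{-k,i\sqrt T}(x)$ matched to the boundary at $x_0$ gives the same count.

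Second, I would change variables by setting $s=\tfrac12 e^{t}$, so $ds=s\,dt$ and $V=s^{2}+2ks$. With $s_0=\tfrac12 e^{x_0}$ and $s^{*}=-k+\sqrt{T+k^{2}}$,
\begin{align*}
\int_{x_0}^{t^{*}(T)}\sqrt{T-V(t)}\,dt=\int_{s_0}^{s^{*}}\sqrt{T-s^{2}-2ks}\,\frac{ds}{s}.
\end{align*}
Completing the square, $T-s^{2}-2ks=R^{2}-(s+k)^{2}$ with $R=\sqrt{T+k^{2}}$, and substituting $s+k=R\sin\theta$ recasts this as
\begin{align*}
R^{2}\int_{\theta_0}^{\pi/2}\frac{\cos^{2}\theta}{R\sin\theta-k}\,d\theta,\qquad \sin\theta_0=\frac{s_0+k}{R}.
\end{align*}

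Third, I would evaluate this integral exactly. Polynomial division gives
\begin{align*}
\frac{R^{2}\cos^{2}\theta}{R\sin\theta-k}=-R\sin\theta-k+\frac{T}{R\sin\theta-k},
\end{align*}
so the integral splits into an elementary part plus $T\int d\theta/(R\sin\theta-k)$. The Weierstrass substitution $u=\tan(\theta/2)$ converts the latter to a rational integral whose antiderivative is a logarithm involving the roots $A=(R+\sqrt T)/k$ and $B=(R-\sqrt T)/k$ of $ku^{2}-2Ru+k$. Since $A\sim 2\sqrt T/k$ and $B\sim k/(2\sqrt T)$ for large $T$, evaluating at $u=1$ and $u=\tan(\theta_0/2)\sim(s_0+k)/(2R)$ and collecting terms, the $k$-dependence cancels to leading order and I obtain
\begin{align*}
\int_{s_0}^{s^{*}}\sqrt{T-s^{2}-2ks}\,\frac{ds}{s}=\sqrt T\log\sqrt T+\sqrt T\bigl(\log(2/s_0)-1\bigr)+O(1).
\end{align*}
Using $\log(2/s_0)=2\log 2-x_0$ and dividing by $\pi$ reproduces \eqref{E:lag_asym}.

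The main obstacle is the first step: upgrading the standard Weyl law (which typically provides only $o(\sqrt T)$) to a Bohr--Sommerfeld count with $O(1)$ error. This demands either a uniform WKB analysis on $[x_0,t^{*}(T)]$ with Langer matching through the turning point, or a direct asymptotic analysis of the Whittaker-function eigenfunctions at the boundary $x_0$. The remainder of the argument is routine but requires attention to the cancellations in Step 3: the $ke^{t}$ perturbation is just weak enough that it only shifts the $O(1)$ remainder, leaving the $\sqrt T\log\sqrt T$ and $\sqrt T$ coefficients untouched.
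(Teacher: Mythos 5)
This theorem is not proved in the paper at all: it is quoted from Lagarias \cite{Lag2009}, and the only related ingredient the paper supplies is Theorem \ref{T:speca}, the quantization rule $N(T;V,x_0)=\frac{1}{\pi}\int_{x_0}^{V^{-1}(T)}\sqrt{T-V(t)}\,dt+O(1)$, which is itself quoted from \cite{Lag2009} without proof. Your Step 1 is exactly that statement, so your outline is structurally the same as the source: reduce to the action integral with an $O(1)$ remainder, then evaluate the integral. Your evaluation of the integral is correct; I checked it. With $s=\tfrac12 e^{t}$, $R=\sqrt{T+k^{2}}$, the elementary piece $\int_{\theta_0}^{\pi/2}(-R\sin\theta-k)\,d\theta=-R\cos\theta_0-k(\tfrac{\pi}{2}-\theta_0)=-\sqrt T+O(1)$ produces the $-1$ in the coefficient of $\sqrt T$, and the Weierstrass substitution with roots $A,B$ satisfying $AB=1$, $A-B=2\sqrt T/k$ gives $-\sqrt T\log\frac{(A-1)(u_0-B)}{(1-B)(A-u_0)}=\sqrt T\log\sqrt T+\sqrt T\log(2/s_0)+O(1)$; since $\log(2/s_0)=2\log 2-x_0$, the $k$-dependence indeed survives only in the $O(1)$ term, as the statement requires.

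The genuine gap is the one you flag yourself: upgrading the Weyl count to a Bohr--Sommerfeld rule with $O(1)$ error. That is not a routine step --- it is essentially the entire analytic content of Lagarias's proof, which proceeds by writing the eigenfunctions explicitly as Whittaker functions $W_{-k,\mu}(e^{t})$ and extracting the eigenvalue count from their uniform asymptotics (your ``alternative'' route is in fact the actual route). A generic WKB argument with Langer matching does give $O(1)$ for such monotone, rapidly growing potentials, but it requires verifying quantitative hypotheses (control of $\int|V''|/|T-V|^{3/2}$ and $\int|V'|^{2}/|T-V|^{5/2}$ away from the turning point, plus the turning-point and boundary contributions) that you assert rather than check. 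As written, your proposal correctly identifies where the difficulty lies and disposes of the elementary half, but the half that carries the weight of the theorem is deferred to a citation-shaped placeholder --- which, to be fair, is also what the paper itself does.
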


There is an $O(1)$ error term here, whereas in the Weyl--asymptotics for a potential that encodes the zeros 
of $\zeta$ should include an $O(\log\sqrt{T})$ term. We show in this paper that if $V$ is ''close'' to the 
potential $\frac{1}{4}e^{2t}+ke^{t}$, then there is no $O(\log\sqrt{T})$ in the Weyl--asymptotics. On the other hand, 
we show that if $V$ is ``too far'' from $V$, then there is an error term that is bigger than $O(\log\sqrt{T})$. These
vague categories of ``too close'' and ``too far'' (which are defined below) do not form a dichotomy, and so a 
potential -- if it exists -- that matches the asymptotics appropriately must be neither ''too close'' or ''too far''.

Our first theorem shows that any perturbation by a function 
$\eps(t)$ that satisfies $\abs{\eps(t)}\lesssim e^{t}$ will not really change the asymptotics (below, 
$Q_0(t)=\frac{1}{4}e^{2t}$):
\begin{thm}\label{T:converse}
Let $\eps(t)$ be a function that satisfies $\abs{\eps(t)}\lesssim e^{t}$. Then there holds: 
\begin{align*}
N(T; Q_0 + \eps, x_0, \alpha) 
= \frac{1}{\pi}\sqrt{T}\log{\sqrt{T}} + \frac{1}{\pi}(2\log2 - 1 - x_0)\sqrt{T} + O(1),
\end{align*}
where the constant in the $O(1)$ depends on $x_0$ and $\eps$. In particular:
\begin{align*}
\abs{N(T; Q_0,x_0, \alpha) - N(T; Q_0 + \eps, x_0, \alpha)} < O(1), 
\end{align*}
and so no $\log{\sqrt{T}}$ is introduced. 
\end{thm}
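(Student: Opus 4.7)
The plan is to deduce Theorem \ref{T:converse} from Lagarias's Theorem \ref{T:lag} by a min--max sandwich, with no new hard estimate required. Write $Q_k(t) = \frac{1}{4}e^{2t} + ke^{t}$ for the Lagarias family. The hypothesis $|\eps(t)|\lesssim e^{t}$ supplies a constant $C>0$ with $-Ce^{t}\le \eps(t)\le Ce^{t}$ on $[x_0,\infty)$; adding $Q_0$ yields the pointwise sandwich
\begin{align*}
Q_{-C}(t) \;\le\; Q_0(t)+\eps(t) \;\le\; Q_{C}(t).
\end{align*}

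Next I would invoke the min--max principle. Fixing the boundary parameter $\alpha$, the three Schr\"odinger operators $L_{Q_{\pm C},x_0,\alpha}$ and $L_{Q_0+\eps,x_0,\alpha}$ share a common quadratic form domain, on which $\ip{L_{V_2}f}{f}-\ip{L_{V_1}f}{f}=\int (V_2-V_1)|f|^2\ge 0$ whenever $V_1\le V_2$. Courant's principle then forces $\lambda_n(V)$ to be nondecreasing in $V$, so the counting function $V \mapsto N(T;V,x_0,\alpha)$ is nonincreasing, and the pointwise sandwich gives
\begin{align*}
N(T;Q_C,x_0,\alpha) \;\le\; N(T;Q_0+\eps,x_0,\alpha) \;\le\; N(T;Q_{-C},x_0,\alpha).
\end{align*}

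Both bounding quantities are counting functions of Lagarias potentials, so Theorem \ref{T:lag} delivers
\begin{align*}
N(T;Q_{\pm C},x_0,\alpha) = \frac{1}{\pi}\sqrt{T}\log\sqrt{T} + \frac{1}{\pi}(2\log 2 - 1 - x_0)\sqrt{T} + O(1),
\end{align*}
with implicit constants depending on $C$ and $x_0$. Since the upper and lower asymptotics agree through the $\sqrt{T}$ term, the squeezed quantity $N(T;Q_0+\eps,x_0,\alpha)$ inherits the same expansion, and the claimed bound $|N(T;Q_0,x_0,\alpha)-N(T;Q_0+\eps,x_0,\alpha)|<O(1)$ follows by subtracting the $\eps=0$ case.

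The only potentially delicate point, and in my view the main obstacle, is verifying that the three operators genuinely live in a common self-adjoint framework in which Courant's principle applies as stated. This is standard Sturm--Liouville technology because $\tfrac{1}{4}e^{2t}+ke^{t}$ and $Q_0+\eps$ are bounded below on $[x_0,\infty)$ and grow to $+\infty$, forcing limit-point behavior at $\infty$ and uniquely determining a self-adjoint realization once the $\alpha$-boundary condition is imposed at $x_0$. Once this framework is fixed, no estimate beyond what Theorem \ref{T:lag} already supplies is required, and the qualitative message, that perturbations of size $O(e^{t})$ are invisible at the $O(\log\sqrt{T})$ scale, is essentially a consequence of the monotonicity of the counting function under pointwise ordering of potentials.
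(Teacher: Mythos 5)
Your proposal is correct and takes essentially the same route as the paper: sandwich $Q_0+\eps$ pointwise between the Lagarias potentials $Q_{\pm C}$, use monotonicity of the eigenvalue counting function in the potential, and apply Theorem \ref{T:lag} to both bounds. The only difference is that you justify the monotonicity via the min--max principle, whereas the paper derives it from Sturm's oscillation/comparison theorem (Corollary \ref{C:evalests}); both are standard and the argument is otherwise identical.
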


On the other hand, we prove that perturbations 
of $Q_0$ by functions that are bigger (resp. smaller) than $e^{(1+\delta)x}$ (resp. $-e^{(1+\delta)x}$) for some 
$\delta>0$ introduce a term that is on the order of (at least) $T^{\frac{\eps}{4}}\sqrt{\log T}$:
\begin{thm}\label{T:mt}
% Let $V$ be a Weyl potential that has Weyl asymptotics: 
% \begin{align*}
% N(T;V, x_0, \alpha) = \frac{1}{\pi}\sqrt{T}\log{\sqrt{T}} + \frac{1}{\pi}(2\log2 - 1 - x_0)\sqrt{T} + O(1), 
% \end{align*}
% for some real $c$. Then for all $\eps>0$ there is an $a$ such that: 
% \begin{align*}
% \frac{1}{4}e^{2t} - ae^{(1+\eps)t} \leq V(t) \leq \frac{1}{4}e^{2t} + ae^{(1+\eps)t}.
% \end{align*}
Let $V$ be a function that satisfies $V(t) \geq \frac{1}{4}e^{2t} + \frac{1}{4}e^{(1+\eps)t}$ 
(or $V(t) \leq \frac{1}{4}e^{2t} + \frac{1}{4}e^{(1+\eps)t}$) for some $\eps>0$, then we have:
\begin{align*} 
\abs{N(T; V, x_0, \alpha) - N(T; Q_{0}, x_0, \alpha)} \gtrsim T^{\frac{\eps}{4}}\sqrt{\log T}.
\end{align*}
%A similar statement is true if $V(t) < \frac{1}{4}e^{2t} - \frac{1}{4}e^{(1+\eps)t}$.
\end{thm}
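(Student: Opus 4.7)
The plan is to reduce, via the min--max principle, to a comparison of $N(T;Q_0,x_0,\alpha)$ with $N(T;Q_0+\tfrac{1}{4}e^{(1+\eps)t},x_0,\alpha)$ (the other case is entirely analogous), and then to compute the resulting asymptotic difference of the Weyl integrals directly. Applied to the Schr\"odinger quadratic forms, the hypothesis $V\geq Q_0+\tfrac{1}{4}e^{(1+\eps)t}$ yields $\lambda_n(L_V)\geq \lambda_n(L_{Q_0+\frac{1}{4}e^{(1+\eps)t}})$ and hence $N(T;V,x_0,\alpha)\leq N(T;Q_0+\tfrac{1}{4}e^{(1+\eps)t},x_0,\alpha)$. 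So it suffices to prove
\begin{align*}
N(T;Q_0,x_0,\alpha)-N(T;Q_0+\tfrac{1}{4}e^{(1+\eps)t},x_0,\alpha)\gtrsim T^{\eps/4}\sqrt{\log T}.
\end{align*}

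Next, I would apply the Pr\"ufer--transformation Weyl formula used in the proof of Theorem~\ref{T:lag}: for each sufficiently regular, monotonically increasing potential $V$ on $[x_0,\infty)$ one has
\begin{align*}
N(T;V,x_0,\alpha)=\frac{1}{\pi}\int_{x_0}^{\tau_V(T)}\sqrt{T-V(t)}\,dt+E_V(T),
\end{align*}
where $\tau_V(T)$ is the classical turning point $V(\tau_V(T))=T$. For $V=Q_0$ the remainder is $O(1)$ by Theorem~\ref{T:lag}, and the same Pr\"ufer angle argument, adapted to the potential $Q_0+\tfrac{1}{4}e^{(1+\eps)t}$, yields a remainder that is at most polylogarithmic in $T$. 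The main computation is then the leading--order integral difference
\begin{align*}
I(T):=\int_{x_0}^{\infty}\Bigl[\sqrt{(T-Q_0(t))_+}-\sqrt{(T-Q_0(t)-\tfrac{1}{4}e^{(1+\eps)t})_+}\Bigr]\,dt.
\end{align*}
Rationalizing $\sqrt{a}-\sqrt{b}=(a-b)/(\sqrt{a}+\sqrt{b})$ and making the substitution $u=e^{2t}/(4T)$ transforms $I(T)$, on the bulk of the integration interval, into a constant multiple of $T^{\eps/2}\int_{0}^{1} u^{(\eps-1)/2}(1-u)^{-1/2}\,du$; the latter is a convergent Beta--type integral whose value is finite and positive for every $\eps>0$, so $I(T)\gtrsim T^{\eps/2}$, a bound that comfortably dominates the claimed $T^{\eps/4}\sqrt{\log T}$.

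The principal obstacle, as I see it, is the rigorous control of the Weyl remainder $E_V(T)$ for the perturbed potential: one must verify that $E_V(T)=o(T^{\eps/4}\sqrt{\log T})$ so that the comparison of leading--order terms survives in the difference of eigenvalue counts. I would handle this by extending Lagarias's Pr\"ufer angle analysis, showing that a perturbation of size $O(e^{(1+\eps)t})$ contributes at most $O(T^{(\eps-1)/2}\log T)$ to the phase integral, which is negligible compared with $T^{\eps/2}$. A secondary technical point is the neighborhood of the turning point, where the linearization $\sqrt{a}-\sqrt{b}\approx (a-b)/(2\sqrt{a})$ breaks down; this is dispatched by excising a strip of width $\sim T^{(\eps-1)/2}$ around $u=1$ and showing, via a direct Taylor bound, that the excised piece contributes only lower--order terms to $I(T)$.
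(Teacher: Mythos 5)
Your proposal is correct in substance and follows the same skeleton as the paper: reduce to the pure comparison of $N(T;Q_0)$ with $N(T;Q_0+\tfrac14 e^{(1+\eps)t})$ by eigenvalue-count monotonicity (you invoke min--max; the paper uses the Sturm comparison/oscillation argument of Corollary \ref{C:evalests} --- functionally the same step), then lower-bound the difference of the Weyl integrals after rationalizing $\sqrt{a}-\sqrt{b}$. The two places you differ are worth noting. First, the Weyl remainder you flag as the ``principal obstacle'' is not re-derived in the paper: Theorem \ref{T:speca} is quoted from Lagarias precisely for the two potentials $Q_k$ and $\tfrac14 e^{2t}+\tfrac14 e^{(1+\eps)t}$ with an $O(1)$ error, so no Pr\"ufer-angle extension or polylogarithmic loss is needed; your plan to rebuild that estimate is sound but does more work than the paper. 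Second, your estimate of the integral difference is global: the substitution $u=e^{2t}/(4T)$ turns the bulk into a convergent Beta-type integral and gives $\gtrsim T^{\eps/2}$, which indeed dominates the claimed bound; the paper instead localizes to $[(1-\delta)p,\,p]$ near the turning point $p=W^{-1}(T)$ and, after cruder mean-value bounds, obtains exactly $T^{\eps/4}\sqrt{\log T}$. So your computation is actually sharper than the paper's and shows the true size of the discrepancy is $\simeq T^{\eps/2}$, with the turning-point strip being negligible for the lower bound (you only need the trivial bound that the excess piece of the $Q_0$-integral beyond $W^{-1}(T)$ is nonnegative). One caveat: the ``other case'' is not literally the displayed $V\leq\tfrac14 e^{2t}+\tfrac14 e^{(1+\eps)t}$ (which is vacuous --- $V=Q_0$ satisfies it); the intended hypothesis, as the paper's Lemma \ref{L:lb} shows, is $V\leq\tfrac14 e^{2t}-\tfrac14 e^{(1+\eps)t}$, and there the analogous argument needs a Weyl law for $\tfrac14(e^{2t}-e^{(1+\eps)t})$, a point glossed over both in your ``entirely analogous'' remark and, to be fair, in the paper itself.
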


Finally, we prove that if $V(t)$ is sub-exponential on a fixed percentage of every interval $[x_0, R]$, then 
$N(T;V,x_0)$ does not even match the assymptotics in the first order:

% is a potential that is less than $e^{t^{1-\eps}}$ (resp. bigger than 
% $e^{t^{1+\eps}}$) on sets of ``big'' measure, then $N(T;V,x_0,\alpha) \gtrsim \sqrt{T}(\log {T})^{\frac{1}{1-\eps}}$
% (resp. $N(T;V,x_0,\alpha) \lesssim \sqrt{T}(\log {T})^{\frac{1}{1+\eps}}$):
% \begin{thm}\label{T:bigoh}
% Let $V$ be a positive potential such that there are positive numbers, $k,\delta$ such that for all 
% sufficiently large $R$ there holds $\abs{\{t: V(t) < Ce^{t^{1-\eps}}\}
% \cap[x_0,R]}>
% k+\delta R$. Then 
% $$N(T;V,x_0)\gtrsim \sqrt{T}(\log T)^{\frac{1}{1-\eps}}.$$
% \end{thm}
% \begin{proof}
% In the interval $[x_0,(\log T)^{\frac{1}{1-\eps}}-k]$ there is a set of measure at least 
% $\delta(\log T)^{\frac{1}{1-\eps}}$ on which $V(t)<Ce^{t^{1-\eps}}$. Additionally, on that set, we 
% know that $Ce^{t^{1-\eps}} < \gamma T$ for some $0<\gamma <1$ that does not depend on $T$. 
% This means that $T-Ce^{t^{1-\eps}}> (1-\gamma)T$. Now, by Weyl law Theorem \ref{T:weyl_law}, we have 
% that:
% \begin{align*}
% N(T;V,x_0) 
% \simeq \abs{\{(t,\xi):\xi^2 + V(t) < T\}}
% &> \abs{\{(t,\xi):\xi^2 + V(t) < T\} \cap \{(t,\xi): V(t)<Ce^{t^{1-\eps}}\}}
% \\& > \abs{\{(t,\xi):\xi^2 + Ce^{t^{1-\eps}} < T\} \cap \{(t,\xi): V(t)<Ce^{t^{1-\eps}}\}}
% \\&=\int_{t=x_0}^{\left(\log\frac{T}{C}\right)^{\frac{1}{1-\eps}}}
%   \left(T-Ce^{t^{1-\eps}}\right)^{\frac{1}{2}}\unit_{\{t:V(t)<Ce^{t^{1-\eps}}\}}(t)dt
% \\&> (1-\gamma)^{\frac{1}{2}}\sqrt{T}\delta \left(\log\frac{T}{C}\right)^{\frac{1}{1-\eps}}
% \simeq \sqrt{T}(\log T)^{\frac{1}{1-\eps}}.
% \end{align*}
% \end{proof}

\begin{thm}\label{T:bigoh}
Let $V$ be a real potential and suppose there is a sub--exponential function $W(t)$ (by 
sub--exponential we mean $\frac{\log W(t)}{t}\to 0$ as $t\to\infty$) such 
that that there is a positive number, $\delta$ such that for all 
sufficiently large $R$ there holds $\abs{\{t: V(t) < W(t)\}
\cap[x_0,R]}> \delta R$. Then:
\begin{align*}
\frac{N(T;V,x_0)}{\sqrt{T}\log T}\to\infty
\textnormal{ as }
T\to\infty. 
\end{align*}
\end{thm}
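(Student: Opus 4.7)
The plan is to obtain a Weyl-type lower bound $N(T;V,x_0) \gtrsim \frac{1}{\pi}\int (T-V(t))_+^{1/2}\, dt$ on a growing interval $[x_0,R]$, then use the hypothesis to estimate this integral from below on the positive-density set $\{V<W\}$, and finally take $R$ as a sufficiently large multiple of $\log T$ so that the sub-exponential bound on $W$ yields the desired growth.

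First I would reduce to the truncated Dirichlet problem on $[x_0,R]$: by the min--max principle, $N(T;V,x_0) \geq N^D(T;V,[x_0,R])$, and by Sturm's oscillation theorem the right-hand side equals, up to $O(1)$, the number of zeros on $(x_0,R]$ of the solution $y_T$ of $-y'' + (V-T)y = 0$ with $y_T(x_0)=0$. Introducing the Pr\"ufer angle $\theta(t)$ via $y_T = \rho\sin\theta$ and $y_T' = \rho\cos\theta$, the zero count equals $\lfloor \theta(R)/\pi\rfloor$, and $\theta'(t) = \cos^2\theta + (T-V(t))\sin^2\theta$.

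Next I would lower-bound $\theta(R)$. On the allowed set $\{V<T\}\cap[x_0,R]$, the modified Pr\"ufer variable with $k(t)=\sqrt{T-V(t)}$ satisfies $\phi'(t) = k(t) + O(k'/k)$, so integrating gives a contribution $\int_{\{V<T\}\cap[x_0,R]} \sqrt{T-V}\, dt$ modulo WKB corrections. On the forbidden set $\{V\geq T\}$ the Pr\"ufer angle cannot cross a multiple of $\pi$ from above (since $\theta'=1$ whenever $\sin\theta=0$), so on each forbidden component the net change in $\theta$ is bounded by $\pi$ in absolute value. Combining these yields $N(T;V,x_0) \geq \frac{1}{\pi}\int_{x_0}^R \sqrt{(T-V(t))_+}\, dt - C$, where $C$ absorbs WKB and forbidden-region corrections. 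Now apply the hypothesis: replacing $W$ by its running maximum (which remains sub-exponential), I may assume $W$ is nondecreasing, so $V(t) < W(t) \leq W(R)$ on $\{V<W\}\cap[x_0,R]$, giving $\int_{x_0}^R \sqrt{(T-V)_+}\, dt \geq \delta R\,\sqrt{T-W(R)}$.

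Finally, take $R = A\log T$ for an arbitrary parameter $A>0$. Since $\log W(R)/R \to 0$, one has $W(A\log T) = T^{o(1)}$, so $W(R) \leq T/2$ for $T$ large, and $\sqrt{T-W(R)} \geq \sqrt{T/2}$. Thus $N(T;V,x_0)/(\sqrt{T}\log T) \geq \delta A/(\pi\sqrt{2}) - o(1)$, and letting $A\to\infty$ completes the proof.

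The hard part will be making the Weyl/WKB lower bound rigorous in the absence of regularity assumptions on $V$. If $V$ is sufficiently regular the modified Pr\"ufer argument gives a clean error term; otherwise I would fall back on Dirichlet bracketing over the connected components of the open set $\{V<W\}\cap[x_0,R]$, comparing $-\partial^2+V$ to $-\partial^2+W(R)$ on each component to get $N^D(T;V,I) \geq \lfloor |I|\sqrt{(T-W(R))_+}/\pi\rfloor$. This introduces floor-function losses that are harmless provided $\{V<W\}$ is not too fragmented, which amounts to an implicit regularity requirement on $V$. In either approach the correction term must grow slower than the main term $\delta A\sqrt{T}\log T$, which is guaranteed by the freedom to take $A$ arbitrarily large.
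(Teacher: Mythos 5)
Your proposal reaches the right conclusion but by a genuinely different route from the paper. The paper does not prove any Weyl-type lower bound itself: it quotes a phase-space Weyl law (its Theorem \ref{T:weyl_law}, $N(T;V,x_0)\simeq\abs{\{(t,\xi):\xi^2+V(t)<T,\ t\ge x_0\}}$) as a black box, writes the sub-exponential bound as $W(t)=e^{t\eps(t)}$ with $\eps(t)\to0$, $\psi(t):=t\eps(t)\to\infty$, and integrates over the region $\{ \xi^2+Ce^{t\eps(t)}<T/2\}\cap\{V<Ce^{t\eps(t)}\}$ out to $t=\psi^{-1}(\log\frac{T}{2C})$, obtaining $N(T;V,x_0)\gtrsim\delta\,\psi^{-1}(\log\frac{T}{2C})\sqrt{T}$; the divergence then comes from the superlinearity $\psi^{-1}(s)/s\to\infty$. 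You instead cut off at $R=A\log T$ for a fixed $A$, use monotonization of $W$ to get $\int_{x_0}^R\sqrt{(T-V)_+}\,dt\ge\delta R\sqrt{T-W(R)}\ge\delta R\sqrt{T/2}$, and let $A\to\infty$ at the end; that limiting argument is clean and entirely equivalent in strength, and it avoids introducing $\psi^{-1}$ altogether. So the two proofs differ in (i) where the Weyl-type bound comes from (you derive a one-dimensional lower bound via min--max, Sturm oscillation, and Pr\"ufer/Dirichlet bracketing; the paper cites a semiclassical statement) and (ii) how the divergence is extracted (your $A\to\infty$ versus the paper's superlinear $\psi^{-1}$).

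On the difficulty you flag at the end: you are right that the component-wise Dirichlet bracketing loses a unit per connected component of $\{V<W\}\cap[x_0,R]$, and this is a genuine obstruction if the allowed set is fragmented into pieces of length $\lesssim T^{-1/2}$ (a fixed potential can have, on $[n,n+1]$, order $2^n$ allowed intervals of length $2^{-n-1}$, and then for $R=A\log T$ with $A$ large the far components contribute nothing to the bracketing bound). Be aware, however, that this is not a defect of your argument relative to the paper: the paper's Theorem \ref{T:weyl_law} is stated for an arbitrary ``positive potential'' with no regularity, and precisely such comb-like potentials are where a two-sided phase-space estimate fails, so the same implicit non-fragmentation or mild regularity hypothesis is being assumed there too. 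If you make that hypothesis explicit (or assume enough smoothness of $V$ for the modified Pr\"ufer/WKB error to be controlled), your proof is complete; without it, neither your fallback nor the paper's quoted black box covers the fully general statement.
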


Thus, if our goal is to find a potential $V$ such that 
$N(T;V,x_0)= \frac{1}{\pi}\sqrt{T}\log{\sqrt{T}} + \frac{1}{\pi}(2\log2 - 1 - x_0)\sqrt{T} + O(\log{T})$, we 
can summarize our findings as follows:
\begin{itemize}
 \item [(a)] Theorem \ref{T:bigoh} gives the heuristic that if $N(T;V,x_0) = O(\sqrt{T}\log \sqrt{T})$, 
 then $V(t)$ must be at least on the order of $e^{at}$ for some $a>0$. (More precisely, it can't be 
 dominated by a sub--exponential function on sets of fixed percentages of intervals $[0,R]$). 
 \item [(b)] Theorem \ref{T:mt} says that for potentials of the form $\frac{1}{4}e^{2t} + \eps(t)$ where 
 $\eps(t) > e^{(1+\eps)t}$ (or $\eps(t)< e^{(1+\eps)e}$), $N(T;V,x_0)$ will not have the desired 
 asymptotics. In particular, if $V(t)\simeq e^{at}$ and $a\neq 2$, then $N(T;V,x_0)$ does not have 
 the desired asymptotics. Thus, the heuristic is that if $V$ is a potential with the desired spectral 
 asymptotics, then $V(t)$ must be a small perturbation of $\frac{1}{4}e^{2t}$.
 \item [(c)] Finally, Theorem \ref{T:converse} shows that small perturbations of $\frac{1}{4}e^{2t}$ will not 
 produce the desired asymptotics. 
\end{itemize}

The theme of this paper is then that a potential that gives the desired Weyl asymptotics is not just a 
``small peturbation'' of a well--established potential (like the Morse potential). It seems that 
any potential that gives the desired Weyl asymptotics is going to have to oscillate wildly 
between sub and super exponential functions, will have singularities, and will probably not be a function 
(that is, it is a distribution). 

% This is somewhat disappointing because it seemed likely (if maybe hard to prove) that a small perturbation of 
% $e^{2t}$ would introduce a term of $O(\log T)$ in the Weyl asymptotics. This is desired because the 
% potential $e^{2t}$ is motivated by physical problems (see \cite{Lag2009}) and so a perturbation of this 
% potential that gave the desired Weyl asymptotics would give a concrete physical connection to the 
% Riemann zeta function. Yet, we have shown that this is not the case (and any potential that does produce the 
% correct Weyl asymptotics seems too strange to arise from any ``real--life'' physical situation). 

% In addition to the connections to the Riemann zeta function, we hope that experts in spectral theory will 
% find some interest in this paper. For example, to a novice such as me, Theorem \ref{T:bigoh} seems interesting.
% This is because it says that the Weyl asymptotics can be controlled (or approximated in some sense) by some 
% knowledge of the potential on (relatively) small sets. 

Additionally, to make much more progress in this area, a version of Theorem \ref{T:speca} 
that puts less restrictions on $V$ is needed. Theorem \ref{T:converse} is a step in this direction
and hopefully can be extended to other non--exponential potentials.

For the rest of the paper, we set $\alpha=0$ and we don't write the ``$\alpha$'' 
in $N(T;V, x_0, \alpha)$. The proofs of Theorems \ref{T:converse}, \ref{T:bigoh}, and 
\ref{T:mt} are in the following sections. Section 2 also contains some material that is 
used throughout the paper.

\section*{Acknowledgment}
Alex Poltoratski introduced me to this area and told me about several lines of investigation 
and ideas to pursue in this area. I would like to thank him for this and for discussing 
some of the thoughts herein.

%%%%%%%%%%%%%%%%%%%%%%%%%%%%%%%%%%%%%%%%%%%%%%%%%%%%%%%%%%%%%%%
%%%%%%%%%%%%%%%%%%%%%%SECTION SECTION SECTION%%%%%%%%%%%%%%%%%%
%%%%%%%%%%%%%%%%%%%%%%%%%%%%%%%%%%%%%%%%%%%%%%%%%%%%%%%%%%%%%%%
\section{Proof of Theorem \ref{T:converse}}
The proof of Theorem \ref{T:converse} is based on a very simple observation along with the 
integral estimate in \cite{Lag2009}. We give some background information on which the simple
observation is based. This observation is also used in other parts of this paper. 

% The theorem -- as stated -- applies only to Weyl potentials. In this section, we discuss 
% what happens when the potential $V$ is not a Weyl potential. 
% 
% First, we note that there are many Weyl potentials. For example if $V$ satisfies $V'(x)>0$, $V''(x) \geq 0$, 
% $V'(x)\to\infty$, $V''(x)\leq\{V'(x)\}^{\gamma}$, for ($1<\gamma<4/3$) then $V$ is a Weyl potential. 

We have this basic theorem due to Sturm; see for example \cites{Titchmarsh1946, LevitanSargsjan1970}
\begin{thm}\label{T:st}
Let $u$ be a (any) solution to:
\begin{align*}
 u'' + (\lambda - g)u = 0
\end{align*}
and let $v$ be a (any) solution to:
\begin{align*}
 v'' + (\lambda - h)u = 0, 
\end{align*}
where $h(x) < g(x)$ (so that $\lambda - g(x) < \lambda - h(x)$). Between any two zeros of $u$, there is a 
zero of $v$. In particular:
\begin{align*}
 \#\{\textnormal{zeros of } v\} \geq \#\{\textnormal{zeros of } u\} - 1
\end{align*}
\end{thm}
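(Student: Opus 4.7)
The statement is the classical Sturm comparison theorem, and the natural strategy is the Wronskian argument.

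I would begin by forming the Wronskian $W(x) = u(x)v'(x) - u'(x)v(x)$ and computing its derivative using the two ODEs. After substituting $u'' = (g - \lambda)u$ and $v'' = (h - \lambda)v$, the cross terms cancel, yielding $W'(x) = (h(x) - g(x))\,u(x)v(x)$. Since $h < g$ by hypothesis, the sign of $W'$ is controlled entirely by the sign of $uv$: whenever $u$ and $v$ have the same sign, $W$ is strictly decreasing, and whenever they have opposite signs, $W$ is strictly increasing.

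Next, let $x_1 < x_2$ be two consecutive zeros of $u$; without loss of generality assume $u > 0$ on $(x_1, x_2)$ (else replace $u$ by $-u$). Suppose for contradiction that $v$ has no zero in $[x_1, x_2]$; after possibly replacing $v$ by $-v$, assume $v > 0$ on this closed interval. At the endpoints we have $W(x_1) = -u'(x_1)v(x_1)$ and $W(x_2) = -u'(x_2)v(x_2)$. Because $u$ is positive just to the right of $x_1$ and just to the left of $x_2$ and vanishes at both, we have $u'(x_1) > 0$ and $u'(x_2) < 0$ (the strictness uses that $u$ is not identically zero, which would force $u \equiv 0$ by uniqueness). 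Combined with $v > 0$ at both endpoints, this gives $W(x_1) < 0 < W(x_2)$. On the other hand, $W' = (h-g)uv < 0$ on $(x_1,x_2)$, so $W$ is strictly decreasing there, forcing $W(x_1) > W(x_2)$. This is the desired contradiction, so $v$ must vanish somewhere in $[x_1, x_2]$.

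Finally, the counting statement is a routine consequence: if $u$ has zeros $x_1 < x_2 < \cdots < x_N$, the $N-1$ intervals $[x_i, x_{i+1}]$ each contain at least one zero of $v$, and these zeros are distinct (adjacent intervals share only endpoints). Hence $v$ has at least $N-1$ zeros. I do not expect a serious obstacle here; the only mild care needed is in handling the boundary case where $v$ might vanish exactly at $x_1$ or $x_2$, which is already covered by the conclusion since such a zero still lies in the closed interval between consecutive zeros of $u$.
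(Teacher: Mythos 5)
The paper does not actually prove this theorem: it is quoted as the classical Sturm comparison theorem with references to Titchmarsh and Levitan--Sargsjan, so there is no in-paper argument to compare against; your Wronskian proof is the standard one, and the computation $W'=(h-g)uv$ together with the endpoint sign analysis is correct. One point to tighten: as you set it up (assuming $v\neq 0$ on the closed interval $[x_1,x_2]$), you only conclude that $v$ vanishes \emph{somewhere in the closed interval}, and then your assertion that the zeros picked from adjacent intervals are distinct is exactly the issue rather than a justification --- adjacent intervals share an endpoint, and if $v$ happened to vanish precisely at a shared zero of $u$, the same zero of $v$ would be counted for two intervals, so the bound $\#\{\textnormal{zeros of } v\}\geq \#\{\textnormal{zeros of } u\}-1$ would not follow from the closed-interval statement alone. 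The fix is immediate given the strict hypothesis $h<g$: run the contradiction assuming only that $v$ has no zero in the \emph{open} interval $(x_1,x_2)$, so that (after a sign change) $v>0$ on $(x_1,x_2)$ and $v\geq 0$ at the endpoints; then $W(x_1)=-u'(x_1)v(x_1)\leq 0$ and $W(x_2)=-u'(x_2)v(x_2)\geq 0$, while $W'=(h-g)uv<0$ on $(x_1,x_2)$ forces $W(x_2)<W(x_1)$, a contradiction. This yields a zero of $v$ strictly between consecutive zeros of $u$, the chosen zeros lie in pairwise disjoint open intervals, and the counting statement follows exactly as you describe.
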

We have the following corollary (see also, for example, \cite{Simon2005}):
\begin{cor}\label{C:evalests}
Let $g,h$ be as above. Consider the same equations as above but consider only solutions that satisfy a 
boundary condition at $0$ and are in $L^2$ (that is, we have an eigenvalue problem; we consider only the 
''limit point'' case). Then for every $a>0$ we have:
\begin{align*}
\#\{\textnormal{eigenvalues of second problem in } [0,a]\}
\geq \#\{\textnormal{eigenvalues of first problem in } [0,a]\} + O(1).
\end{align*}
\end{cor}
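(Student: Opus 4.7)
The plan is to reduce eigenvalue counting to zero counting via the Sturm oscillation theorem, and then apply Theorem \ref{T:st} at the fixed spectral parameter $\lambda = a$.

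First, for each problem I would fix the unique (up to scalar) solution satisfying the prescribed boundary condition at $0$. Denote by $u_a$ the solution of $u'' + (a - g)u = 0$ satisfying this boundary condition, and similarly by $v_a$ the solution of $v'' + (a - h)v = 0$. In the limit point case on $[0,\infty)$, the standard Sturm oscillation theorem identifies the number of eigenvalues of the problem in the interval $[0, a]$ with the number of zeros of the corresponding Dirichlet-type solution in $(0, \infty)$, up to an additive constant that depends only on the choice of boundary condition (and on whether we count endpoints). Applying this identification to both problems reduces the claim to the inequality
\begin{align*}
\#\{\text{zeros of } v_a \text{ in } (0,\infty)\} \geq \#\{\text{zeros of } u_a \text{ in } (0,\infty)\} + O(1).
\end{align*}

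Next, since $h(x) < g(x)$, at the common spectral parameter $\lambda = a$ we have $a - h(x) > a - g(x)$, so the hypotheses of Theorem \ref{T:st} are satisfied with $v_a$ playing the role of the ``more oscillatory'' solution. On any compact interval $[0,R]$, Theorem \ref{T:st} gives
\begin{align*}
\#\{\text{zeros of } v_a \text{ in } [0,R]\} \geq \#\{\text{zeros of } u_a \text{ in } [0,R]\} - 1.
\end{align*}
Letting $R \to \infty$ transfers this inequality to the half-line; the constant $-1$ is absorbed into the $O(1)$.

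The main obstacle, and the step I would be most careful with, is justifying the half-line Sturm oscillation correspondence between the eigenvalue count of $L_g$ (resp.\ $L_h$) below $a$ and the zero count of $u_a$ (resp.\ $v_a$) in the limit point setting. One has to check that a finite number of zeros in $(0,\infty)$ corresponds to a finite number of eigenvalues below $a$, and vice versa, and that the correspondence incurs only an error that depends on the boundary condition convention but not on $a$. This is standard (a Prüfer-angle argument works, or one can cite a reference such as \cite{Simon2005} directly), but it is the only nontrivial ingredient beyond the comparison theorem. Once this correspondence is in hand, the proof is essentially a two-line application of Theorem \ref{T:st} followed by a passage to the limit.
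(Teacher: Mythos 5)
Your proposal is correct and rests on the same two ingredients as the paper's proof: the oscillation-theory identification of the number of eigenvalues in $[0,a]$ with the number of zeros of the boundary-condition solution at spectral parameter $a$, combined with Theorem \ref{T:st}. The paper packages this as a continuous sweep in $\lambda$ (tracking that each solution gains exactly one zero as $\lambda$ crosses an eigenvalue), whereas you apply the correspondence statically at $\lambda=a$; this is a presentational difference only, and you correctly flag the limit-point oscillation correspondence as the one nontrivial step.
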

\begin{proof}
First, consider the two differential equations:
\begin{align}\label{E:bp}
y'' + (\lambda - g)y = 0 
\end{align}
and
\begin{align}\label{E:sp}
y'' + (\lambda - h)y = 0.
\end{align}
We want to consider solutions to these equations that satisfy $y(0)=0$ and $y'(0)=1$. Let $y(x,\lambda; g)$ 
and $y(x, \lambda; h)$ denote solutions to the respective problems. We consider the limit point case and 
we have that $y(x,\lambda; g)\in L^2$ if and only if $\lambda$ is an eigenvalue of \eqref{E:bp}; and similarly 
for $y(x,\lambda; h)$ and \eqref{E:sp}. 

For a fixed $\lambda$, we know from Theorem \ref{T:st} that 
$y(x,\lambda; h)$ has at least as many zeros as $y(x,\lambda; g)$ (up to an $O(1)$ error). Furthermore -- also 
by Theorem \ref{T:st} -- if 
$\lambda_{k}(g)$ and $\lambda_{k+1}(g)$ are the $k^{\textnormal{th}}$ and $(k+1)^{\textnormal{th}}$ eigenvalues 
for \eqref{E:bp}, then $y(x, \lambda; g)$ has $k$ zeros for $\lambda_{k}(g) < \lambda < \lambda_{k+1}(g)$; that 
is, $y(x, \lambda; g)$ gains a zero only when $\lambda$ is an eigenvalue (at which point 
it gains exactly one zero). Of course, similar statements are true for $y(x, \lambda; h)$. 

Now, to prove the Corollary \ref{C:evalests} we reason as follows. Starting with $\lambda = 0$, increase 
$\lambda$ in a continuous manner. If $\lambda$ passes through two eigenvalues of problem \eqref{E:bp} with out 
passing through an eigenvalue of \eqref{E:sp}, then we have a contradiction to Theorem \ref{T:st}. 
Indeed, if $\lambda = \lambda_2(g)$, and $\lambda_{0}(h) > \lambda_{2}(g)$ then we have that 
$y(x, \lambda; g)$ has $2$ zeros while $y(x, \lambda; h)$ has no zeros (Theorem \ref{T:st} says that 
$y(x, \lambda; h)$ should have at least one zero). Continue increasing $\lambda$ this way, noting that 
whenever it passes through an eigenvalue of problem \eqref{E:bp}, it must pass through at least one 
eigenvalue of problem \eqref{E:sp}. 
\end{proof}

We now prove Theorem \ref{T:converse}.
\begin{proof}[Proof of Theorem \ref{T:converse}]
By assumption, there is a $C>0$ such that $\frac{1}{4}e^{2t} - Ce^{t} < \eps(t) < \frac{1}{4}e^{2t} + Ce^{t}$. 
By the theorem of Lagarias $N(T; \frac{1}{4}e^{2t} \pm C e^{t}, x_0) 
= \frac{1}{\pi}\sqrt{T}\log{\sqrt{T}} + \frac{1}{\pi}(2\log2 - 1 - x_0)\sqrt{T} + O(1)$. By Corollary \ref{C:evalests}:
\begin{align*}
N(T; \frac{1}{4}e^{2t} + Ce^{t})
\leq N(T; V, x_0) 
\leq N(T; \frac{1}{4}e^{2t} - Ce^{t}).
\end{align*}
Since the outer two terms are equal, up to an $O(1)$ error, this implies that $N(T; V, x_0) 
= \frac{1}{\pi}\sqrt{T}\log{\sqrt{T}} + \frac{1}{\pi}(2\log2 - 1 - x_0)\sqrt{T} + O(1)$.
\end{proof}

%%%%%%%%%%%%%%%%%%%%%%%%%%%%%%%%%%%%%%%%%%%%%%%%%%%%%%%%%%%%%%%
%%%%%%%%%%%%%%%%%%%%%%SECTION SECTION SECTION%%%%%%%%%%%%%%%%%%
%%%%%%%%%%%%%%%%%%%%%%%%%%%%%%%%%%%%%%%%%%%%%%%%%%%%%%%%%%%%%%%
\section{Proof of Theorem \ref{T:mt}}

To prove Theorem \ref{T:mt}, we use Weyl's law. This is a well--known theorem with several 
variants. We quote the one from \cite{Lag2009}:
\begin{thm}\label{T:speca}
Let $V(t) = Q_k(t)$ or $\frac{1}{4}e^{2t} + \frac{1}{4}e^{(1+\eps)t}$. Then there holds:
\begin{align}\label{E:weyl_law_2}
N(T; V, x_0)
= \frac{1}{\pi}\int_{x_0}^{V^{-1}(T)}\sqrt{T - V(t)}dt + O(1).
\end{align} 
\end{thm}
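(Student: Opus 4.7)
I would prove Theorem \ref{T:speca} via the modified Prüfer transformation together with Sturm's oscillation theorem. On the classically allowed region $[x_0, V^{-1}(T)]$, set $p(t) := \sqrt{T-V(t)}$ and represent the solution of $-y'' + Vy = Ty$ with $y(x_0)=0$ as $y = \rho\sin\theta$, $y' = p\rho\cos\theta$ with $\theta(x_0)=0$. A direct calculation yields the Prüfer phase equation
$$\theta'(t) = p(t) + \frac{p'(t)}{2\,p(t)}\sin\bigl(2\theta(t)\bigr).$$
In the limit-point case, Sturm oscillation theory identifies $N(T;V,x_0)$ with $\theta(V^{-1}(T))/\pi$ up to an $O(1)$ correction coming from the classically forbidden tail $t > V^{-1}(T)$; this is essentially the content of Corollary \ref{C:evalests} applied in both directions.

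Integrating the Prüfer equation produces the claimed main term plus the oscillatory remainder
$$R(T) \;:=\; \int_{x_0}^{V^{-1}(T)} \frac{p'(t)}{2\,p(t)}\sin\bigl(2\theta(t)\bigr)\, dt,$$
so the theorem reduces to the uniform bound $R(T) = O(1)$. I would split the domain at $t^{*} := V^{-1}(T) - c\,T^{-1/3}$, one Airy width inside the turning point. On the bulk $[x_0, t^{*}]$ I would integrate by parts against $\sin(2\theta)$, using $\theta' \geq p$ to move one power of $p$ from the denominator onto the boundary term; at $t^{*}$, where $p \asymp T^{1/3}$ and $p' \asymp T^{2/3}$, this boundary contribution is $|p'|/p^{2} = O(1)$, and the leftover bulk integral is absolutely convergent once one invokes $V'/V \to 2$ at infinity, which holds for both $V = Q_{k}$ and $V = \tfrac{1}{4}e^{2t} + \tfrac{1}{4}e^{(1+\eps)t}$.

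The principal obstacle is the Airy sliver $[t^{*}, V^{-1}(T)]$, where $p \to 0$ and the integration-by-parts argument breaks down because the oscillation rate of $\sin(2\theta)$ degenerates. I would dispatch this region by a Langer/Olver change of variable that conjugates the Schr\"odinger equation on the sliver to an exact Airy equation plus a controllable perturbation; the Prüfer phase gained across a simple turning point is then the standard Maslov shift $\pi/4 + O(1)$, an absolute constant. Simultaneously, the main-term contribution of the sliver is $\int_{t^{*}}^{V^{-1}(T)} \sqrt{T-V(t)}\, dt = O(1)$, since $p \lesssim \sqrt{V'(V^{-1}(T)) \cdot T^{-1/3}} \asymp T^{1/3}$ on a sliver of width $T^{-1/3}$. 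Absorbing both sliver contributions into the $O(1)$ remainder and combining with the bulk estimate yields the Weyl formula uniformly for the two potentials in the statement. The entire argument is robust to the lower-order correction $ke^{t}$ or $\tfrac{1}{4}e^{(1+\eps)t}$ precisely because it only enters through terms that are subdominant in the ratio $V'/V$ and hence do not alter the turning-point geometry.
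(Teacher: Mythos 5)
The paper does not actually prove Theorem \ref{T:speca}: it is quoted from Lagarias \cite{Lag2009}, where the $O(1)$-error Weyl law is established for the Morse potentials $Q_k$ by a direct analysis of that specific operator. Your Pr\"ufer/Langer argument is therefore a genuinely independent route, and it has a real advantage over the paper's citation: the second potential $\tfrac{1}{4}e^{2t}+\tfrac{1}{4}e^{(1+\eps)t}$ is not literally covered by \cite{Lag2009}, whereas your argument treats both cases uniformly, since it only uses that $V$ is increasing, exponentially growing, and has a single simple turning point. The overall architecture is sound: the phase equation $\theta'=p+\frac{p'}{2p}\sin(2\theta)$ is correct, the oscillation-theoretic identification $N(T)=\theta(V^{-1}(T))/\pi+O(1)$ is legitimate in the limit-point case (the forbidden region $t>V^{-1}(T)$ is disconjugate and contributes at most one zero), the scales at $t^{*}=V^{-1}(T)-cT^{-1/3}$ are computed correctly ($p(t^{*})\asymp T^{1/3}$, $|p'(t^{*})|\asymp T^{2/3}$), and the sliver contributes $O(1)$ to both the phase and the main term.

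Two points in your sketch are stated loosely and should be repaired. First, $\theta'\geq p$ is false as written: since $p'<0$ on the allowed region, $\theta'=p+\frac{p'}{2p}\sin(2\theta)$ can dip below $p$. What you actually need for the integration by parts is $\theta'\geq p/2$ on the bulk, which holds precisely when $|p'|\leq p^{2}$, i.e.\ $V'\leq 2(T-V)^{3/2}$; this is exactly the condition that fails inside the Airy sliver and holds on $[x_0,t^{*}]$ for suitable $c$, so you should make the constant chase explicit. Second, "the leftover bulk integral is absolutely convergent once one invokes $V'/V\to 2$" is not the right justification: the quantity you must control is $\int_{x_0}^{t^{*}}\bigl|\bigl(\tfrac{p'}{4p^{2}}\bigr)'\bigr|\,dt$, and the clean argument is that $\tfrac{p'}{4p^{2}}=-\tfrac{V'}{8(T-V)^{3/2}}$ is monotone in $t$ (both $V'$ and $(T-V)^{-3/2}$ increase), so its total variation equals its value at $t^{*}$, which is $O(1)$ by your turning-point scales. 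With those two repairs the plan is a complete and correct proof, and arguably a more self-contained one than the paper offers.
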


To prove Theorem \ref{T:mt} we use Weyl's law above and the following two lemmas. 

\begin{lm}\label{L:ub}
Let $V$ be a potential such that there is an $\eps>0$ such that:
\begin{align*}
V(t) > \frac{1}{4}e^{2t} + \frac{1}{4}e^{(1+\eps)t}.
\end{align*}
Then 
\begin{align*}
\abs{N(T; Q_0, x_0) - N(T; V, x_0)} \gtrsim T^{\frac{\eps}{4}}\sqrt{\log T}.
\end{align*}
\end{lm}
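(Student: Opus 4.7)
The plan is to reduce to the extremal potential $V_1(t):=\tfrac{1}{4}e^{2t}+\tfrac{1}{4}e^{(1+\eps)t}$ by monotonicity and then bound a difference of Weyl integrals from below. Since $V\geq V_1$ pointwise, Corollary~\ref{C:evalests} gives $N(T;V,x_0)\leq N(T;V_1,x_0)+O(1)$, so it suffices to prove the stated bound with $V_1$ in place of $V$. Both $Q_0$ and $V_1$ are covered by Theorem~\ref{T:speca}, so writing $T_0:=Q_0^{-1}(T)=\tfrac{1}{2}\log(4T)$ and $T_1:=V_1^{-1}(T)\leq T_0$ and splitting $\int_{x_0}^{T_0}=\int_{x_0}^{T_1}+\int_{T_1}^{T_0}$, the difference $\pi\bigl[N(T;Q_0,x_0)-N(T;V_1,x_0)\bigr]$ equals
\begin{align*}
\int_{x_0}^{T_1}\!\bigl[\sqrt{T-Q_0(t)}-\sqrt{T-V_1(t)}\bigr]dt + \int_{T_1}^{T_0}\!\sqrt{T-Q_0(t)}\,dt + O(1),
\end{align*}
with both summands nonnegative.

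For the first summand, the identity $\sqrt{a}-\sqrt{b}=(a-b)/(\sqrt{a}+\sqrt{b})$ together with the crude denominator bound $\sqrt{a}+\sqrt{b}\leq 2\sqrt{T}$ gives integrand $\geq e^{(1+\eps)t}/(8\sqrt{T})$, hence a lower bound of order $e^{(1+\eps)T_1}/\sqrt{T}$. In the main regime $0<\eps\leq 1$, the $e^{2t}$ term of $V_1$ dominates at $t=T_1$, so $T_1=\tfrac{1}{2}\log(4T)+O_\eps(1)$ and therefore $e^{(1+\eps)T_1}/\sqrt{T}\gtrsim T^{\eps/2}$, already comfortably dominating $T^{\eps/4}\sqrt{\log T}$. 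For $\eps>1$, the first summand degrades to order $\sqrt{T}$, and I would instead exploit the second summand: the Lagarias-style substitution $\sin\theta=e^t/(2\sqrt{T})$ converts $\int_{T_1}^{T_0}\sqrt{T-Q_0(t)}\,dt$ into $\sqrt{T}\int_{\theta_1}^{\pi/2}\cos^2\theta/\sin\theta\,d\theta$ with $\theta_1=e^{T_1}/(2\sqrt{T})$, and the antiderivative $\log|\tan(\theta/2)|+\cos\theta$ together with $T_1\sim\tfrac{1}{1+\eps}\log(4T)$ produces a lower bound of order $\sqrt{T}\log T$.

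The main technical point is the bookkeeping across the two regimes $\eps\leq 1$ and $\eps>1$, since the balance between the $e^{2t}$ and $e^{(1+\eps)t}$ terms in $V_1$ flips at $\eps=1$ and a different summand of the decomposition carries the estimate on each side. Neither summand is sharp across the full range, so the crude bounds must be applied selectively. Everything else reduces to elementary manipulation of the Weyl integral plus the Sturm comparison corollary already established in the paper.
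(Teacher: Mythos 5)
Your proposal is correct and follows the paper's skeleton exactly — reduce to the extremal potential $V_1=\tfrac14e^{2t}+\tfrac14e^{(1+\eps)t}$ via the Sturm comparison corollary, apply the Weyl integral of Theorem \ref{T:speca} to both $Q_0$ and $V_1$, and split the difference of integrals at $V_1^{-1}(T)$ — but the execution of the key estimate is genuinely different. The paper localizes the first integral to a window $[p_\delta,p]$ near the turning point, controls the denominator $\sqrt{e^{2u}-e^{2p_\delta}}$ with the mean value theorem, and extracts $\sqrt p\,e^{\eps p/2}\simeq T^{\eps/4}\sqrt{\log T}$ after a careful choice of $\delta$; you instead use the crude global bound $\sqrt a+\sqrt b\le 2\sqrt T$ over all of $[x_0,T_1]$, which is simpler and in fact yields the stronger order $T^{\eps/2}$ when $\eps\le 1$. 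Your treatment also has a real advantage in scope: the paper's estimates $\log(1+e^{(\eps-1)p})\simeq e^{(\eps-1)p}$ and $e^{p-u}\to1$ silently require $\eps<1$, and the tail integral over $[p,u]$ is dismissed as small, whereas you correctly recognize that for $\eps>1$ the balance flips, $T_1\sim\tfrac1{1+\eps}\log(4T)$, and it is the tail integral that carries the bound, giving order $\sqrt T\log T$. One caveat worth recording: since $N(T;Q_0,x_0)\lesssim\sqrt T\log T$ by Theorem \ref{T:lag}, the difference can never exceed $O(\sqrt T\log T)$, so the stated conclusion $\gtrsim T^{\eps/4}\sqrt{\log T}$ is actually false for $\eps>2$; your argument covers the entire range $0<\eps\le2$ where the lemma can hold, which is more than the paper's own proof does, but neither proof (nor the statement) flags this restriction.
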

\begin{proof}
First, by corollary \ref{C:evalests}, $N(T; Q_0, x_0) > N(T; V, x_0)$ (since $Q_0 < V$). Also, 
since $V(t) > \frac{1}{4}e^{2t} + \frac{1}{4}e^{(1+\eps)t}$, by Corollary \ref{C:evalests}, it follows that
$N(T; \frac{1}{4}e^{2t} + \frac{1}{4}e^{t}, x_0) > N(T; V, x_0)$ and so:
\begin{align*}
\abs{N(T; Q_0, x_0) - N(T; V, x_0)}
= N(T; Q_0, x_0) - N(T; V, x_0)
> N(T; Q_0, x_0) - N(T; \frac{1}{4}e^{2t} + \frac{1}{4}e^{(1+\eps)t}, x_0),
\end{align*}
and so we get a lower bound on this last term. Letting $W(t) = \frac{1}{4}(e^{2t}+e^{(1+\eps)t})$, 
by \eqref{E:weyl_law_2}, this is equal to:
\begin{align*}
\int_{t=0}^{Q_0^{-1}(T)}\sqrt{T-Q_0(t)}dt
-\int_{t=0}^{W^{-1}(T)}\sqrt{T-W(t)}dt.
\end{align*}
For the rest of the proof, let $T=\frac{1}{4}e^{2u}$ so that $Q_{0}^{-1}(T) = u$; also let $p=W^{-1}(T)$. Thus, 
we can write the integral above as:
\begin{align}\label{E:split}
\frac{1}{2}\int_{t=0}^{p}\left(\sqrt{e^{2u}-e^{2t}} - \sqrt{e^{2u}-e^{2t}-e^{(1+\eps)t}}\right) dt
+ \frac{1}{2}\int_{t=p}^{u}\sqrt{e^{2u}-e^{2t}}dt.
\end{align}

% Given that both potentials satisfy Weyl's law, we get a lower bound on:
% \begin{align}\label{E:tbd_}
% \int_{x=0}^{\infty}\int_{\xi=0}^{\infty}\unit_{\{(x,\xi): x_0<\xi<\sqrt{T-Q_0(x)}\}}d\xi dx
% -\int_{x=0}^{\infty}\int_{\xi=0}^{\infty}\unit_{\{(x,\xi): x_0<\xi<\sqrt{T-V(x)}\}}d\xi dx. 
% \end{align}
% Note by Corollary \ref{C:evalests}, the first term is bigger than the second term (since $Q_0 < V$). Also, 
% since $V(t) > \frac{1}{4}e^{2t} + \frac{1}{4}e^{(1+\eps)t}$, by Corollary \ref{C:evalests}, it follows that

% Since $V(t) > \frac{1}{4}e^{2t} + \frac{1}{4}e^{(1+\eps)t}$, it follows that  
% $T-V(t) \leq T- (\frac{1}{4}e^{2t} + \frac{1}{4}e^{(1+\eps)t})$. Therefore:
% \begin{align*}
% \int_{x=0}^{\infty}\int_{\xi=0}^{\infty}\unit_{\{(x,\xi): 0<\xi<\sqrt{T-V(x)}\}}d\xi dx
% \leq \int_{x=0}^{\infty}\int_{\xi=0}^{\infty}\unit_{\{(x,\xi): 0<\xi<\sqrt{T-\frac{1}{4}(e^{2x}+e^{(1+\eps)x}}\}}d\xi dx.
% \end{align*}
% Consequently, the difference in \eqref{E:tbd_} is bigger than: 
% \begin{align}\label{E:tbd}
% \int_{x=0}^{\infty}\int_{\xi=0}^{\infty}\unit_{\{(x,\xi): 0<\xi<\sqrt{T-Q_0(x)}\}}d\xi dx
% -\int_{x=0}^{\infty}\int_{\xi=0}^{\infty}\unit_{\{(x,\xi): 0<\xi<\sqrt{T-\frac{1}{4}(e^{2x}+e^{(1+\eps)x}}\}}d\xi dx, 
% \end{align}
%and this is the integral we will bound. 

We estimate the first integral in \eqref{E:split}. (As a side note, we mention the 
second integral is ``small'' and does not contribute much). We first estimate $u-p$. Since $p$ satisfies 
$e^{2p} + e^{(1+\eps)p} = e^{2u}$, by taking $\log$ on both sides, we find:
\begin{align*}
2p + \left(\log{(e^{2p} + e^{(1+\eps)p})}-\log{e^{2p}}\right) = 2u,
\end{align*}
so that $2(u-p) = \left(\log{(e^{2p} + e^{(1+\eps)p})}-\log{e^{2p}}\right)$. The quantity in parentheses 
is estimated as:
\begin{align*}
\log{(e^{2p} + e^{(1+\eps)p})}-\log{e^{2p}}
=\log(1+e^{(\eps-1)p})
%\leq \sup_{0\leq x\leq e^{(1+\eps)p}}\frac{e^{(1+\eps)p}}{e^{2p}+x}
%=\frac{e^{(1+\eps)p}}{e^{2p}}
\simeq e^{(\eps - 1)p}.
\end{align*}
% Additionally:
% \begin{align*}
% \log{(e^{2p} + e^{(1+\eps)p})}-\log{e^{2p}}
% \geq \inf_{0\leq x\leq e^{(1+\eps)p}}\frac{e^{(1+\eps)p}}{e^{2p}+x}
% \geq \frac{1}{2}\frac{e^{(1+\eps)p}}{e^{2p}}
% =\frac{1}{2}e^{(\eps-1)p}.
% \end{align*}
Thus, $u-p \simeq e^{(\eps-1)p}$.  Let 
$p_{\delta}=p-\delta p$, where $\delta>0$ is a (small) number that will be chosen later that depends only 
on $\eps$. We will show that:
\begin{align}\label{E:toshow}
\int_{p_{\delta}}^{p}\sqrt{e^{2u} - e^{2t}} - 
\sqrt{e^{2u}-e^{2t}-e^{(1+\eps)t}}dt
\gtrsim \sqrt{p}e^{\frac{\eps}{2} p}.
\end{align}
Note that $u \simeq \log{T}$ and $e^{\frac{\eps}{2}u}\simeq T^{\frac{\eps}{4}}$. Thus, this estimate  
implies the claim since $\sqrt{p}e^{\frac{\eps}{2}p} - \sqrt{u}e^{\frac{\eps}{2}u}\to 0$ as $T\to\infty$.

The integrand is estimated as:
\begin{align*}
\sqrt{e^{2u} - e^{2t}} - \sqrt{e^{2u}-e^{2t}-e^{(1+\eps)t}}
&=\frac{2^{(1+\eps)t}}{\sqrt{e^{2u} - e^{2t}} + \sqrt{e^{2u}-e^{2t}-e^{(1+\eps)t}}}
% &\gtrsim \inf_{0\leq x \leq e^{(1+\eps)t}} 
%     \frac{e^{(1+\eps)t}}{\sqrt{e^{2u}-e^{2t}-x}}
% \\&=  \frac{e^{(1+\eps)t}}{\sqrt{e^{2u}-e^{2t}}}
\\& \geq \frac{e^{(1+\eps)t}}{2(\sqrt{e^{2u} - e^{2p_\delta}})}.
\end{align*}
Additionally, we have:
\begin{align*}
\int_{p_\delta}^{p}e^{(1+\eps)t}dt
= \frac{1}{1+\eps}\left(e^{(1+\eps)p} - e^{(1+\eps)p_{\delta}}\right)
\geq \frac{1}{1+\eps}e^{(1+\eps)p_\delta}(p-p_\delta)
= \frac{\delta}{1+\eps}e^{(1+\eps)(1-\delta)p}p.
\end{align*}
The ``$\geq$'' above follows from the mean value theorem. 

Furthermore using the mean value theorem again, we have
\begin{align*}
e^{2u} - e^{2p_\delta}
\leq e^{2u}(u-p_\delta)
= e^{2u}((u-p) + \delta p)
= e^{2u}(u - p(1-\delta)).
\end{align*}
Putting together these estimates, we find:
\begin{align*}
\int_{p_{\delta}}^{p}\sqrt{e^{2u} - e^{2t}} - 
\sqrt{e^{2u}-e^{2t}-e^{(1+\eps)t}}dt
&\geq \frac{\delta}{1+\eps}
    \frac{e^{(1+\eps)(1-\delta)p}p}{\sqrt{e^{2u}(u - p(1-\delta)}}
% \\&\gtrsim \frac{\delta p}{1+\eps}
%     \frac{e^{(1+\eps)(1-\delta)p}}{e^{u}\sqrt{e^{(\eps-1)p} + \delta p}}
\\&\gtrsim \frac{\sqrt{\delta p}}{1+\eps}
    e^{p-u}e^{\eps p - (1+\eps)\delta p}.
% \\&= \frac{2^{-k}p}{1+\eps}
%     \frac{e^{-(1+\eps)2^{-k}p}e^{(1+\eps)p}e^{-u}}{\sqrt{u - p(1-2^{-k})}}
% \\&=\frac{2^{-k}p}{1+\eps}
%     \frac{e^{p-u}e^{p(1-(1+\eps)2^{-k})}}
%     {\sqrt{u - p(1-2^{-k})}}
\end{align*}
Now, choose $\delta$ so small that $(1+\eps)\delta < \eps/2$ so that last quantity above is 
bigger than $\frac{\sqrt{\delta p}}{1+\eps}e^{p-u}e^{\frac{\eps}{2} p}$  (since $p-u\to 0$ so $e^{p-u}\to 1$).
This completes the proof.  
\end{proof}

\begin{rem}
Note that if $\eps =0$, the estimate above is:
\begin{align*}
\frac{\sqrt{\delta p}}{1+\eps}
    e^{p-u}e^{\eps p - (1+\eps)\delta p}
=\frac{\sqrt{\delta p}}{1+\eps}
    e^{p-u}e^{-\delta p}
\to 0.
\end{align*}
\end{rem}

%Now we prove the lower bound.
\begin{lm}\label{L:lb}
Let $V$ be a potential such that there is an $\eps>0$ such that:
\begin{align*}
V(t) < \frac{1}{4}e^{2t} - \frac{1}{4}e^{(1+\eps)t}.
\end{align*}
Then 
\begin{align*}
\abs{N(T; V, x_0)-N(T; Q_0, x_0)} \gtrsim T^{\frac{\eps}{4}}\sqrt{\log T}.
\end{align*}
\end{lm}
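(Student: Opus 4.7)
The plan is to mirror the proof of Lemma \ref{L:ub} with the comparison potentials reversed. Let $W(t) = \frac{1}{4}e^{2t} - \frac{1}{4}e^{(1+\eps)t}$, so the hypothesis becomes $V \leq W \leq Q_0$. By Corollary \ref{C:evalests} the order of the eigenvalue counts reverses: $N(T;V,x_0) \geq N(T;W,x_0) \geq N(T;Q_0,x_0)$. Consequently,
\begin{align*}
\abs{N(T; V, x_0) - N(T; Q_0, x_0)}
 = N(T;V,x_0) - N(T;Q_0,x_0)
 \geq N(T;W,x_0) - N(T;Q_0,x_0),
\end{align*}
so it suffices to lower-bound $N(T;W,x_0) - N(T;Q_0,x_0)$ by $T^{\eps/4}\sqrt{\log T}$.

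Next I would apply the Weyl-law formula of Theorem \ref{T:speca} to both $W$ and $Q_0$. Set $T = \frac{1}{4}e^{2u}$, so that $Q_0^{-1}(T) = u$, and let $p = W^{-1}(T)$; since $W < Q_0$ we now have $p > u$ (the opposite of the situation in Lemma \ref{L:ub}). The two Weyl integrals then split as
\begin{align*}
N(T;W,x_0) - N(T;Q_0,x_0)
 &= \frac{1}{2\pi}\int_0^u \Big(\sqrt{e^{2u}-e^{2t}+e^{(1+\eps)t}} - \sqrt{e^{2u}-e^{2t}}\Big)\,dt \\
 &\quad + \frac{1}{2\pi}\int_u^p \sqrt{e^{2u}-e^{2t}+e^{(1+\eps)t}}\,dt + O(1).
\end{align*}
Both terms are non-negative; I would discard the second and work only with the first, denoted $I_1$.

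To estimate $I_1$ I would rationalize the difference, obtaining integrand $e^{(1+\eps)t}/\big(\sqrt{e^{2u}-e^{2t}+e^{(1+\eps)t}}+\sqrt{e^{2u}-e^{2t}}\big)$, and restrict to $t \in [u_\delta, u]$ with $u_\delta = (1-\delta)u$ for a small $\delta > 0$ to be chosen depending on $\eps$. The mean value theorem bounds $e^{2u}-e^{2t} \lesssim \delta u\cdot e^{2u}$ on this interval, and combined with $e^{(1+\eps)t}\leq e^{(1+\eps)u}$ this bounds the denominator by a constant multiple of $\sqrt{\delta u}\,e^u$ for large $u$ (at least when $\eps<1$; the case $\eps\geq 1$ requires a minor but analogous modification). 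A second application of the mean value theorem gives $\int_{u_\delta}^u e^{(1+\eps)t}\,dt \geq \delta u \cdot e^{(1+\eps)(1-\delta)u}$. Combining,
\begin{align*}
I_1 \gtrsim \sqrt{\delta u}\,e^{(\eps-\delta(1+\eps))u},
\end{align*}
and choosing $\delta$ with $\delta(1+\eps) < \eps/2$ yields $I_1 \gtrsim \sqrt{u}\,e^{\eps u/2} \simeq \sqrt{\log T}\,T^{\eps/4}$, as required. The main obstacle I anticipate is justifying Theorem \ref{T:speca} for $W(t) = \frac{1}{4}(e^{2t} - e^{(1+\eps)t})$, which is not one of the two potentials explicitly covered by the stated version of Weyl's law; this likely requires tracing through the Lagarias argument to verify that the same integral representation remains valid under this sign-flipped perturbation. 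Everything else is a direct adaptation of the estimates already carried out for Lemma \ref{L:ub}.
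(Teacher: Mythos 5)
Your proposal is correct and follows essentially the same route as the paper: reduce via Corollary \ref{C:evalests} to comparing $W=\frac{1}{4}(e^{2t}-e^{(1+\eps)t})$ with $Q_0$, apply the Weyl integral, restrict to $[u_\delta,u]$ with $u_\delta=(1-\delta)u$, rationalize the difference of square roots, bound the denominator by roughly $e^{u}$, integrate $e^{(1+\eps)t}$ via the mean value theorem, and choose $\delta$ with $(1+\eps)\delta<\eps/2$. The one caveat you raise --- that Theorem \ref{T:speca} as stated does not explicitly cover the sign-flipped perturbation $W$ --- is legitimate, but the paper applies it there without further comment as well.
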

\begin{proof}
Similar reasoning above leads us to find a lower bound on:
% \begin{align*}
% \int_{t=0}^{u}\sqrt{e^{2u} - (e^{2t} - e^{(1+\eps)t})}-\sqrt{e^{2u} - e^{2t}}dt.
% \end{align*}
% Similar to above, we find $p-u \simeq e^{(\eps-1)t}$. Also similar to above, we get a lower bound 
% on:
\begin{align*}
\int_{t=u_\delta}^{u}\sqrt{e^{2u} - (e^{2t} - e^{(1+\eps)t})}-\sqrt{e^{2u} - e^{2t}}dt,
\end{align*}
where $u_\delta = u -\delta u$ and $\delta$ depends on $\eps$ and will be chosen later. We find a lower bound 
on the integrand as:
\begin{align*}
\sqrt{e^{2u} - (e^{2t} - e^{(1+\eps)t})}-\sqrt{e^{2u} - e^{2t}}
> \frac{e^{(1+\eps)t}}{\sqrt{e^{2u}-e^{2t} + e^{(1+\eps)t}}}
> \frac{e^{(1+\eps)t}}{\sqrt{e^{2u} + e^{(1+\eps)u_\delta}}}
\simeq \frac{e^{(1+\eps)t}}{e^{u}}.
\end{align*}
And so:
\begin{align*}
\int_{t=u_\delta}^{u}\sqrt{e^{2u} - (e^{2t} - e^{(1+\eps)t})}-\sqrt{e^{2u} - e^{2t}}dt
&> e^{-u}\int_{t=u_\delta}^{u}e^{(1+\eps)t}dt
% \\&= \frac{e^{-u}}{1+\eps}\left(e^{(1+\eps)u} - e^{(1+\eps)u_k}\right)
% \\&\geq \frac{e^{-u}}{1+\eps}e^{(1+\eps)u_k}\delta u
% \\&=\frac{\delta u}{1+\eps}e^{-u}e^{(1+\eps)(u-\delta u)}
\geq\frac{\delta u}{1+\eps} e^{\eps u - (1+\eps)\delta u}.
\end{align*}
As above, choose $\delta$ small enough so that $(1+\eps)\delta < \frac{1}{2}\eps$. 
\end{proof}

%%%%%%%%%%%%%%%%%%%%%%%%%%%%%%%%%%%%%%%%%%%%%%%%%%%%%%%%%%%%%%%
%%%%%%%%%%%%%%%%%%%%%%SECTION SECTION SECTION%%%%%%%%%%%%%%%%%%
%%%%%%%%%%%%%%%%%%%%%%%%%%%%%%%%%%%%%%%%%%%%%%%%%%%%%%%%%%%%%%%
\section{Proof of Theorem \ref{T:bigoh}}
To prove Theorem \ref{T:bigoh} we use the following 
well--known theorem:
\begin{thm}\label{T:weyl_law}
Let $V$ be a positive potential. Then there holds:
\begin{align*}
N(T; V,x_0) \simeq \abs{\{(t,\xi): \abs{\xi}^2 + V(t) < T\}\cap\{(t,\xi):t\geq x_0\}}.
\end{align*}
\end{thm}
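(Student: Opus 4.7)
The plan is to establish the two-sided bound via Dirichlet-Neumann bracketing. First, I would rewrite the right-hand side in a more usable form: for each fixed $t$ with $V(t)<T$, the $\xi$-slice of the set in question is an interval of length $2\sqrt{T-V(t)}$, so
\begin{align*}
\abs{\{(t,\xi): \abs{\xi}^2 + V(t) < T\}\cap\{t\geq x_0\}}
= 2\int_{x_0}^{V^{-1}(T)} \sqrt{T-V(t)}\, dt.
\end{align*}
Thus the claim reduces to showing that $N(T;V,x_0)$ is comparable to $\int_{x_0}^{V^{-1}(T)} \sqrt{T-V(t)}\, dt$.

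Next, I would partition $[x_0, V^{-1}(T))$ into a finite collection of small intervals $\{I_j\}_{j=1}^M$ of length $\ell_j$, and treat $[V^{-1}(T),\infty)$ as one remaining piece. Let $L_V^D$ (respectively $L_V^N$) denote the operator $-\partial^2+V$ decoupled at the endpoints of the $I_j$ with Dirichlet (respectively Neumann) conditions. By the standard min-max bracketing inequality,
\begin{align*}
\sum_{j} N^D_j(T) \;\leq\; N(T;V,x_0) \;\leq\; \sum_{j} N^N_j(T),
\end{align*}
and the tail $[V^{-1}(T),\infty)$ contributes nothing to the Dirichlet count and an exponentially small amount to the Neumann count, since $V\geq T$ there. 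On each $I_j$, let $V_j^- = \min_{I_j} V$ and $V_j^+ = \max_{I_j} V$; by monotonicity of eigenvalues in the potential, $N^D_j(T) \geq N^D_j(T; V_j^+)$ and $N^N_j(T) \leq N^N_j(T; V_j^-)$, where the right-hand quantities refer to constant-potential problems on $I_j$. Those constant-potential counts are explicitly $\frac{\ell_j}{\pi}\sqrt{T-V_j^{\pm}}_+ + O(1)$.

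Summing over $j$ produces upper and lower Riemann sums for $\frac{1}{\pi}\int \sqrt{(T-V(t))_+}\,dt$, plus additive error $O(M)$. Choosing the partition so that $V$ oscillates by a bounded factor on each $I_j$ (for instance, dyadic in the variable $T-V$ on the sublevel set) yields both an upper and a lower bound of the correct form, up to a multiplicative constant.

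The main obstacle is the behavior near the turning point $V^{-1}(T)$, where the local classical momentum $\sqrt{T-V(t)}$ is small and the half-wavelength $\pi/\sqrt{T-V(t)}$ exceeds $\ell_j$, so the constant-potential count on $I_j$ may be zero even though the phase space integrand is positive. This is harmless for the statement because the integral $\int_{x_0}^{V^{-1}(T)}\sqrt{T-V(t)}\,dt$ receives only a small contribution from a neighborhood of the turning point, and can be absorbed into the multiplicative constant implicit in $\simeq$. A secondary point is ensuring the number of partition intervals $M$ stays small compared with the right-hand side, which again reduces to choosing a geometric (rather than arithmetic) partition adapted to $V$.
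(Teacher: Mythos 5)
The paper itself gives no proof of Theorem \ref{T:weyl_law}: it is quoted as a ``well--known'' form of Weyl's law and used as a black box, so there is no argument of the paper's for your proposal to match. Your Dirichlet--Neumann bracketing scheme is indeed the standard route to two-sided phase-space bounds of this kind, and the skeleton (decouple, replace $V$ by its extrema on each interval, count explicitly for constant potentials, control the number of intervals and the turning-point region) is the right one for the potentials the paper actually cares about.

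There is, however, a genuine gap, and it sits exactly where you write ``choosing the partition so that $V$ oscillates by a bounded factor on each $I_j$'': for a general positive potential no such partition exists, and without some regularity hypothesis the two-sided bound itself is not true, so no bookkeeping can rescue the argument as stated. Concretely, take $V=0$ on a long interval $[x_0,R]$ except on a dense array of very tall, very thin barriers (height $H\gg T$, width $w$ with $\sqrt{H}\,w\gg 1$, spacing $d<\pi/\sqrt{T}$), followed by any rapidly growing tail. The barriers remove a negligible amount of phase-space volume, so the right-hand side is of order $\sqrt{T}\,R$, but by the usual tight-binding/strong-barrier heuristic the operator is close to a union of Dirichlet boxes of width $d$, none of which has an eigenvalue below $T$; the eigenvalue count is then far smaller than the phase-space volume, and your lower bound (lower Riemann sum built from $\max_{I_j}V$, error $-M$) visibly cannot produce it either, since $M\gtrsim R/d\gtrsim\sqrt{T}R$. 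So the statement needs a hypothesis such as monotonicity or controlled oscillation of $V$ (satisfied by the Morse-type potentials in the paper), and your proof needs it at the point where you compare the Riemann sums from $\min_{I_j}V$ and $\max_{I_j}V$ with the integral and where you bound $M$; note also that your very first identity, writing the measure of the sublevel set as $2\int_{x_0}^{V^{-1}(T)}\sqrt{T-V(t)}\,dt$, already presupposes that $\{t:V(t)<T\}$ is an interval, i.e. monotone-type behavior. Two smaller points: on the tail where $V\ge T$ the Neumann count below $T$ is exactly zero (the quadratic form is $\ge T\norm{u}^2$), not merely exponentially small; and with monotone $V$ your dyadic-in-$(T-V)$ partition does give the needed control of $M$ and an $O(1)$ contribution from the turning-point interval, so under that added hypothesis the argument closes.
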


We also make the following observation. If $W(t)$ is sub--exponential (by which we mean 
that $\frac{\log W(t)}{t}\to 0$ as $t\to\infty$) then there is a function $\eps(t)$ with 
$\eps(t)\to 0$, $t\eps(t)\to \infty$ and $W(t) = e^{t\eps(t)}$. Indeed, we easily compute 
$\eps(t)$ by noting that $W(t) = e^{t\frac{\log W(t)}{t}}$ and so $\eps(t)=\frac{\log W(t)}{t}$.
Since $W(t)\to\infty$ we observe that $t\eps(t)=\log W(t)\to\infty$. Furthermore, 
since $W(t)$ is sub--exponential, we conclude that $\frac{\log W(t)}{t}\to 0$ as $t\to\infty$. 

The proof of Theorem \ref{T:bigoh} will follow from this observation and the following lemma.
\begin{lm}
Let $V$ and $W$ be as in Theorem \ref{T:bigoh}. Let $\eps(t)$ be a function that satisfies 
$\eps(t)\to 0$ and $t\eps(t)\to\infty$ as $t\to\infty$. Further assume that there is a 
positive number $\delta < 1$ such that for all sufficiently large $R$ there holds 
$\abs{\{t: V(t) < Ce^{t\eps(t)}\}\cap[x_0,R]}> \delta R$. 
Then:
\begin{align*}
\frac{N(T;V,x_0)}{\sqrt{T}\log T}\to\infty
\textnormal{ as }
T\to\infty. 
\end{align*} 
\end{lm}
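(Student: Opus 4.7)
The plan is to invoke the semiclassical Weyl law stated as Theorem~\ref{T:weyl_law} and then exploit the hypothesis that $V$ is majorized by the sub-exponential $W$ on a set of positive density in order to produce a large phase-space region contributing to $N(T;V,x_0)$. Concretely, from Theorem~\ref{T:weyl_law},
\begin{align*}
N(T;V,x_0)\;\simeq\;\bigl|\{(t,\xi):t\geq x_0,\ \xi^{2}+V(t)<T\}\bigr|,
\end{align*}
and for any $R>x_0$ we may lower-bound this by integrating only over $t\in S_{R}:=\{t\in[x_0,R]:V(t)<W(t)\}$, which by hypothesis has measure at least $\delta R$ for all sufficiently large $R$. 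On $S_R$ the one-dimensional slice in $\xi$ has length $2\sqrt{T-V(t)}\geq 2\sqrt{T-W(t)}$ whenever $W(t)<T$.

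The second step is to choose a function $R=R(T)$ with $R(T)\to\infty$ and $R(T)/\log T\to\infty$, while simultaneously guaranteeing that $\sup_{x_0\leq t\leq R(T)}W(t)\leq T/2$ (so that $\sqrt{T-W(t)}\geq\sqrt{T/2}$ uniformly on $S_{R(T)}$). Here is where the sub-exponential hypothesis enters: since $\log W(t)/t\to 0$, for every $\eta>0$ there exists $T_{0}(\eta)$ with $\log W(t)\leq \eta t$ for all $t\geq T_{0}(\eta)$, and hence
\begin{align*}
\sup_{x_{0}\leq t\leq R}\log W(t)\;\leq\;\max\!\bigl(\log W_{\eta}^{*},\,\eta R\bigr)\;\leq\;\eta R
\end{align*}
once $R$ is large enough, where $W_{\eta}^{*}:=\sup_{[x_{0},T_{0}(\eta)]}W$. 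Therefore, one can pick $\eta(T)\to 0$ slowly enough (for instance, by a diagonal selection among the thresholds $T_{0}(1/n)$) so that the choice $R(T):=\log(T/2)/(2\eta(T))$ both satisfies $R(T)/\log T=1/(2\eta(T))\to\infty$ and yields $\sup_{[x_{0},R(T)]}\log W\leq \eta(T)R(T)=\log(T/2)/2$, i.e.\ $\sup_{[x_{0},R(T)]}W\leq\sqrt{T/2}\leq T/2$ for $T$ large.

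Combining the two steps,
\begin{align*}
N(T;V,x_{0})\;\gtrsim\;\int_{S_{R(T)}}2\sqrt{T-W(t)}\,dt\;\geq\;2\sqrt{T/2}\cdot\delta R(T)\;\gtrsim\;\sqrt{T}\,R(T),
\end{align*}
which gives $N(T;V,x_{0})/(\sqrt{T}\log T)\gtrsim R(T)/\log T\to\infty$, as required.

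The main technical obstacle is the calibration of $R(T)$ in the second step: one needs $R(T)$ to outgrow $\log T$ and yet remain inside the region on which $W$ has been tamed by sub-exponentiality. This is really just a careful application of the definition of sub-exponentiality (a diagonal/slow-growth argument), and once it is set up correctly the rest of the proof is an immediate phase-space volume computation coming from Theorem~\ref{T:weyl_law}.
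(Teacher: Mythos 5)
Your proposal is correct and follows essentially the same route as the paper: both apply the phase--space Weyl law of Theorem \ref{T:weyl_law}, lower--bound the volume by integrating over the density-$\delta$ set where $V<W$ up to a cutoff $R(T)$, and use sub-exponentiality of $W$ to force $R(T)/\log T\to\infty$. The only difference is cosmetic --- the paper takes the cutoff to be $\psi^{-1}\left(\log\frac{T}{2C}\right)$ with $\psi(t)=t\eps(t)=\log W(t)$, whereas you calibrate $R(T)$ via a diagonal choice of $\eta(T)\to 0$, which has the minor advantage of not presupposing that $\psi$ is monotone/invertible.
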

\begin{proof}
Let $\psi(t):=t\eps(t)$ and note that by the properties of $\psi(t)$ we have that 
$\frac{\psi^{-1}(t)}{t}\to\infty$ as $t\to\infty$. 
By Theorem \ref{T:weyl_law} we have:
\begin{align*}
N(T;V,x_0) 
\simeq \abs{\{(t,\xi):\xi^2 + V(t) < T\}}
&> \abs{\{(t,\xi):\xi^2 + V(t) < \frac{T}{2}\} \cap \{(t,\xi): V(t)<Ce^{t\eps(t)}\}}
\\& > \abs{\{(t,\xi):\xi^2 + Ce^{t\eps(t)} < \frac{T}{2}\} \cap \{(t,\xi): V(t)<Ce^{t\eps(t)}\}}
\\&=\int_{t=x_0}^{\psi^{-1}\left(\log\frac{T}{2C}\right)}
  \left(T-Ce^{t\eps(t)}\right)^{\frac{1}{2}}\unit_{\{t:V(t)<Ce^{t\eps(t)}\}}(t)dt.
%\\&> (1-\gamma)^{\frac{1}{2}}\sqrt{T}\delta \left(\log\frac{T}{C}\right)^{\frac{1}{1-\eps}}
%\simeq \sqrt{T}(\log T)^{\frac{1}{1-\eps}}.
\end{align*}
Now, on the set over which the integral above is taken, we have that $Ce^{t\eps(t)}<\frac{T}{2}$ and 
so $T-Ce^{t\eps(t)}\simeq T$. Furthermore, the measure of the set over which the integral is being 
taken is at least $\delta \psi^{-1}\left(\log\frac{T}{2C}\right)$. Thus we have that:
\begin{align}\label{E:bo}
N(T;V,x_0) 
\gtrsim \delta \psi^{-1}\left(\log\frac{T}{2C}\right) \sqrt{T}.
\end{align}
To prove the desired claim, we need to show that $\frac{\psi^{-1}(\log\frac{T}{2C})}{\log T} \to\infty$. 
This is equivalent to showing that $\frac{\psi^{-1}(T)}{T}\to\infty$ as $T\to\infty$. But this is 
true because $\frac{\psi(T)}{T}\to 0$ as $T\to\infty$. Thus, this completes the proof. 
% 
% 
% In the interval $[x_0,\psi^{-1}(\log\frac{T}{C})-k]$ there is a set of measure at least 
% $\delta\psi^{-1}(\log\frac{T}{C})$ on which $V(t)<Ce^{t\eps(t)}$. Additionally, on that set, we 
% know that 
% 
% $Ce^{t^{1-\eps}} < \gamma T$ for some $0<\gamma <1$ that does not depend on $T$. 
% This means that $T-Ce^{t^{1-\eps}}> (1-\gamma)T$. Now, by Weyl law Theorem \ref{T:weyl_law}, we have 
% that:
\end{proof}

\section{Remarks and Complements}
In this section, we make some concluding remarks and extend some of the results above. 

\begin{prop}
If $V(t)$ is ``exponential order'' (by which we mean there are positive constants $C,a,b$ such that 
$\frac{1}{C}e^{at}<V(t)<C e^{bt}$) then $N(T;V)\simeq \sqrt{T}\log{T}$. 
\end{prop}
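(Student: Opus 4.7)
The plan is to apply Theorem \ref{T:weyl_law} and sandwich the area $|\{(t,\xi) : \xi^2 + V(t) < T\} \cap \{t \geq x_0\}|$ between two explicit regions using only the exponential envelope on $V$. Both directions reduce to one-dimensional integrals in $t$ that match (up to constants) the target order $\sqrt{T}\log T$, so the argument should be fairly short and elementary.

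For the upper bound, I will use the hypothesis $V(t) > \tfrac{1}{C}e^{at}$. On the sublevel set $\{\xi^2 + V(t) < T\}$ we must have $V(t) < T$, which forces $t < \tfrac{1}{a}\log(CT)$, while trivially $|\xi| < \sqrt{T}$. Thus the region is contained in the rectangle $[x_0, \tfrac{1}{a}\log(CT)] \times [-\sqrt{T}, \sqrt{T}]$, whose area is $\lesssim \sqrt{T}\log T$. Combined with Theorem \ref{T:weyl_law} this gives $N(T;V) \lesssim \sqrt{T}\log T$.

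For the lower bound, I will use $V(t) < Ce^{bt}$, which yields the inclusion $\{\xi^2 + Ce^{bt} < T\} \subset \{\xi^2 + V(t) < T\}$. To get a clean estimate I would further restrict to $t \in [x_0, \tfrac{1}{b}\log(T/(2C))]$, so that $Ce^{bt} \leq T/2$ and consequently $T - Ce^{bt} \geq T/2$. On this strip the fiber in $\xi$ has length $2\sqrt{T - Ce^{bt}} \geq \sqrt{2T}$, and integrating in $t$ over an interval of length $\tfrac{1}{b}\log(T/(2C)) - x_0 \simeq \log T$ produces an area $\gtrsim \sqrt{T}\log T$, completing the matching lower bound.

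No step looks genuinely difficult; the only care required is in bookkeeping the constants $a$, $b$, $C$ in the two logarithms, but both contribute $\log T$ up to a multiplicative constant, so the equivalence $N(T;V)\simeq \sqrt{T}\log T$ follows. The only subtlety worth flagging is that Theorem \ref{T:weyl_law} is stated for positive potentials, which is automatic here since $V(t) > \tfrac{1}{C}e^{at} > 0$, so no additional preparation is needed.
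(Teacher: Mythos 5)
Your proof is correct, but it takes a different route from the paper's. The paper first invokes Corollary \ref{C:evalests} (the Sturm comparison corollary) to reduce the claim to the single-exponential case $V(t)=e^{kt}$, and then evaluates the one-dimensional Weyl integral of Theorem \ref{T:speca} for that potential. You instead apply the phase-space form of Weyl's law, Theorem \ref{T:weyl_law}, directly to $V$ and sandwich the sublevel region between a rectangle $[x_0,\tfrac{1}{a}\log(CT)]\times[-\sqrt{T},\sqrt{T}]$ (upper bound from $V>\tfrac{1}{C}e^{at}$) and the strip where $Ce^{bt}\le T/2$ with fibers of length $\ge\sqrt{2T}$ (lower bound from $V<Ce^{bt}$). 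Your version has a small advantage in rigor: Theorem \ref{T:speca} as quoted in the paper is only stated for the specific potentials $Q_k$ and $\tfrac14 e^{2t}+\tfrac14 e^{(1+\eps)t}$, so the paper's application of it to $e^{kt}$ is an implicit extension, whereas Theorem \ref{T:weyl_law} is stated for general positive potentials and delivers exactly the two-sided order-of-magnitude bound needed. The paper's route, in exchange, showcases the comparison machinery (Corollary \ref{C:evalests}) that is reused throughout. Both arguments are essentially the same computation once unwound, since the phase-space volume equals twice the Weyl integral; your bookkeeping of the constants $a$, $b$, $C$ is fine, as each contributes only a multiplicative constant to the $\log T$ factor.
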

\begin{proof}
First, we note that by Corollary \ref{C:evalests}, we only need to show that $N(T;e^{kt})\simeq \sqrt{T}\log{T}$ 
for all $k>0$. To do this, we use Theorem \ref{T:speca}. Thus:
\begin{align*}
N(T;e^{kt})
\simeq \int_{x_0}^{\frac{1}{k}\log T}\left(T-e^{kt}\right)^{\frac{1}{2}}dt
\gtrsim \int_{x_0}^{\frac{1}{2k}\log T} \sqrt{T} dt
\simeq \sqrt{T}\log T.
\end{align*}
It is even easier to show that $N(T;e^{kt})\lesssim \sqrt{T}\log T$. 
% 
% Also
% \begin{align*}
% N(T;e^{kt})
% \simeq \int_{x_0}^{\frac{1}{k}\log T}\left(T-e^{kt}\right)^{\frac{1}{2}}dt
% \leq \int_{x_0}^{\frac{1}{k}\log T} \sqrt{T} dt
% \simeq \sqrt{T}\log T. 
% \end{align*}
\end{proof}

Here is a proposition that says that if $V$ is super--exponential, then the Weyl Asymptotics are too small:
\begin{prop}
Let $V(t)$ be a super--exponential potential (by which we mean $\frac{\log V(t)}{t}\to\infty$ as $t\to\infty$) 
then: 
\begin{align*}
\frac{N(T;V)}{\sqrt{T}\log T}\to 0
\textnormal{ as }
T\to\infty. 
\end{align*}
\end{prop}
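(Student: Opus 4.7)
The plan is to apply the semiclassical phase-space formula (Theorem \ref{T:weyl_law}) and show that the super-exponential growth of $V$ forces the classically allowed region in the $t$-direction to have length $o(\log T)$. Integrating out the momentum variable first, one gets, for each fixed $t$ with $V(t) < T$, a slice of $\xi$-length bounded by $2\sqrt{T - V(t)} \leq 2\sqrt{T}$. Therefore
\begin{align*}
N(T;V) \;\simeq\; \bigl|\{(t,\xi)\in[x_0,\infty)\times\R : \xi^2 + V(t) < T\}\bigr|
\;\lesssim\; \sqrt{T}\cdot\bigl|\{t\geq x_0 : V(t) < T\}\bigr|.
\end{align*}

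Next I would translate the super-exponential hypothesis into a quantitative bound on the measure of $\{t \geq x_0 : V(t) < T\}$. The assumption $\frac{\log V(t)}{t}\to\infty$ means that for every fixed $M > 0$ there is a threshold $t_M$ such that $V(t) > e^{Mt}$ for all $t > t_M$. Hence for $T$ large enough that $T > e^{Mt_M}$, the inequality $V(t) < T$ forces $t \leq t_M$ or $t < \frac{\log T}{M}$. In either case,
\begin{align*}
\bigl|\{t\geq x_0 : V(t) < T\}\bigr| \;\leq\; t_M + \frac{\log T}{M}.
\end{align*}

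Combining the two estimates,
\begin{align*}
\limsup_{T\to\infty}\frac{N(T;V)}{\sqrt{T}\log T}
\;\lesssim\; \limsup_{T\to\infty}\frac{t_M + \log T/M}{\log T}
\;=\; \frac{1}{M}.
\end{align*}
Since $M > 0$ was arbitrary, the limsup is zero, which is exactly the claim. There is no real obstacle: the only step requiring mild care is making sure the bound on the measure of $\{V < T\}$ is uniform in the (possibly non-monotone) behavior of $V$, which is why I invoke the pointwise bound $V(t) > e^{Mt}$ directly rather than working with a formal inverse $V^{-1}$.
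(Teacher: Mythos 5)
Your proof is correct, and it reaches the same destination by a slightly different and in some ways cleaner route. The paper first invokes the Sturm comparison corollary to replace $V$ by a potential of the exact form $e^{t\eps(t)}$ with $\eps(t)\to\infty$, then bounds the one-dimensional Weyl integral $\int_{x_0}^{\psi^{-1}(\log T)}(T-e^{t\eps(t)})^{1/2}\,dt$ crudely by $\sqrt{T}\,\psi^{-1}(\log T)$ with $\psi(t)=t\eps(t)$, and finishes with $\psi^{-1}(s)/s\to 0$. You instead work directly with the two-dimensional phase-space volume, integrate out $\xi$ to get $N(T;V)\lesssim \sqrt{T}\,\abs{\{t\geq x_0: V(t)<T\}}$, and control the sublevel set by the quantified form of the hypothesis ($V(t)>e^{Mt}$ for $t>t_M$), giving $\limsup \leq C/M$ for every $M$. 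What your version buys: it sidesteps the inverse function $\psi^{-1}$ entirely (whose existence and the limit $\psi^{-1}(s)/s\to 0$ implicitly require $\psi$ to be monotone, or at least need justification), and it handles non-monotone $V$ without appeal to the comparison corollary. What the paper's version buys is consistency with the notation and machinery ($\psi$, $\psi^{-1}$) already set up for Theorem \ref{T:bigoh}, of which this proposition is the mirror image. One cosmetic point: your measure bound should read $(t_M-x_0)+\log T/M$, but this does not affect the limit.
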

\begin{proof} 
Similar to above, we assume that $V(t)>e^{t\eps(t)}$ where $\eps(t)\to\infty$; let $\psi(t)=t\eps(t)$. By Corollary 
\ref{C:evalests}, we get a upper bound on $N(T; e^{t\eps(t)})$:
\begin{align*}
\int_{x_0}^{\psi^{-1}(\log T)}\left(T-e^{t\eps(t)}\right)^{\frac{1}{2}}dt
\leq \sqrt{T}\psi^{-1}(\log T).
\end{align*}
Now, $\frac{\psi^{-1}(t)}{t}\to 0$ as $t\to\infty$ since $\frac{\psi(t)}{t}\to\infty$ as $t\to\infty$. Thus, 
\begin{align*}
\frac{\sqrt{T}\psi^{-1}(\log T)}{\sqrt{T}\log T}=\frac{\psi^{-1}(\log T)}{\log T}\to 0.
\end{align*}
\end{proof}

Finally, we briefly discuss an extension to Theorem \ref{T:bigoh}. Recall that in the proof of 
Theorem \ref{T:bigoh} we had estimate \eqref{E:bo}:
\begin{align*}
N(T;V,x_0) 
\gtrsim \delta \psi^{-1}\left(\log\frac{T}{2C}\right) \sqrt{T}.
\end{align*}
Now, let's make $\delta$ be a function that depends on $R$. That is, we know that $V$ is sub--exponential 
on sets of size $\delta(R) R$ on the intervals $[0,R]$. Then the above estimate is:
\begin{align*}
N(T;V,x_0) 
\gtrsim \delta(\log\frac{T}{2C}) \psi^{-1}\left(\log\frac{T}{2C}\right) \sqrt{T}.
\end{align*}
Thus, $\delta$ can be a decreasing function, so long as:
\begin{align*}
\frac{\delta(\log\frac{T}{2C}) \psi^{-1}\left(\log\frac{T}{2C}\right)}{\log T}\to\infty.
\end{align*}
So, for example, if $\eps(t)=t^{-\eps}$, then $\psi(t)=t^{1-\eps}$ and $\psi^{-1}(t)=t^{1+\gamma}$ for 
some $\gamma>0$. Then the estimate above is:
\begin{align*}
\frac{\delta(\log\frac{T}{2C}) \psi^{-1}\left(\log\frac{T}{2C}\right)}{\log T}
=\frac{\delta(\log\frac{T}{2C}) \left(\log\frac{T}{2C}\right)^{1+\gamma}}{\log T}
=\delta(\log\frac{T}{2C}) \left(\log\frac{T}{2C}\right)^{\gamma}.
\end{align*}
So, this still goes to $\infty$ if, for example, $\delta(t)>t^{-\frac{\gamma}{2}}$.

% \section{STUFF}
% Now we estimate the integral $\frac{1}{\pi}\int_{0}^{\frac{1}{2}\log{T}}\sqrt{T-e^{2t}}dt$. Let 
% $y = e^{2t}$ so that $dy=2e^{2t}dt = 2ydt$. Let $T=2^{r}$. Then:
% \begin{align*}
% \frac{1}{\pi}\int_{0}^{\frac{1}{2}\log{T}}\sqrt{T-e^{2t}}dt
% &=\frac{1}{2\pi}\int_{1}^{T}\sqrt{T-y}\frac{dy}{y}
% \\&\leq\frac{1}{2\pi}\sum_{k=0}^{r-1}\int_{2^{-k-1}T}^{2^{-k}T}\sqrt{T-y}\frac{dy}{y}
% \\&\leq \frac{1}{2\pi}\sum_{k=0}^{r-1}\int_{2^{-k-1}T}^{2^{-k}T}\sqrt{T(1-2^{-k-1})}\frac{dy}{y}
% \\&= \frac{\sqrt{T}}{2\pi}\sum_{k=0}^{r-1}\sqrt{1-2^{-k-1}}\int_{2^{-k-1}T}^{2^{-k}T}\frac{dy}{y}
% \\&= \frac{\sqrt{T}\log{2}}{2\pi}\sum_{k=0}^{r-1}\sqrt{1-2^{-k-1}}
% \\&\leq \frac{\sqrt{T}\log{2}}{2\pi}r
% \\&= \frac{\sqrt{T}\log{2}}{2\pi}\log_{2}{T}
% \\&= \frac{1}{\pi}\sqrt{T}\log{\sqrt{T}}.
% \end{align*}

%%%%%%%%%%%%
%%%References%%%
%%%%%%%%%%%%

\begin{bibdiv}
\begin{biblist}
\bib{CodLev1955}{book}{
   author={Coddington, Earl A.},
   author={Levinson, Norman},
   title={Theory of ordinary differential equations},
   publisher={McGraw-Hill Book Company, Inc., New York-Toronto-London},
   date={1955}
}

\bib{Lag2006}{article}{
   author={Lagarias, Jeffrey C.},
   title={Hilbert spaces of entire functions and Dirichlet $L$-functions},
   conference={
      title={Frontiers in number theory, physics, and geometry. I},
   },
   book={
      publisher={Springer, Berlin},
   },
   date={2006},
   pages={365--377}
}

\bib{Lag2009}{article}{
   author={Lagarias, Jeffrey C.},
   title={The Sch odinger operator with Morse potential on the right
   half-line},
   journal={Commun. Number Theory Phys.},
   volume={3},
   date={2009},
   number={2},
   pages={323--361}
}

\bib{LevitanSargsjan1970}{book}{
   author={Levitan, B. M.},
   author={Sargsjan, I. S.},
   title={Introduction to spectral theory: selfadjoint ordinary differential
   operators},
   note={Translated from the Russian by Amiel Feinstein;
   Translations of Mathematical Monographs, Vol. 39},
   publisher={American Mathematical Society, Providence, R.I.},
   date={1975},
   pages={xi+525}
}

\bib{Simon2005}{article}{
   author={Simon, Barry},
   title={Sturm oscillation and comparison theorems},
   conference={
      title={Sturm-Liouville theory},
   },
   book={
      publisher={Birkh\"auser, Basel},
   },
   date={2005},
   pages={29--43}
}

\bib{Titchmarsh1946}{book}{
   author={Titchmarsh, E. C.},
   title={The Theory of the Riemann Zeta-Function},
   publisher={Oxford, at the Clarendon Press},
   date={1951},
   pages={vi+346}
}
\end{biblist}
\end{bibdiv}

%%%%%%%%%%%%
%%%%END%%%%%%
%%%%%%%%%%%%

\end{document}